\documentclass{amsart}
\usepackage{amsfonts}
\usepackage{latexsym}
\usepackage{hyperref}
\usepackage{amssymb}
\usepackage{amsmath}
\usepackage{enumerate}
\usepackage{latexsym,color,amsmath,amsthm,amssymb,amscd,amsfonts}
\usepackage{graphicx}
\usepackage[normalem]{ulem}

\usepackage[utf8]{inputenc}
\usepackage[T1]{fontenc}

\newtheorem{thm}{Theorem}[section]
\newtheorem{cor}[thm]{Corollary}
\newtheorem{lemma}[thm]{Lemma}

\theoremstyle{remark}

\newtheorem{problem}{Problem}

\begin{document}


\title[Sobolev inequalities for log-concave functions]{Sobolev inequalities for sections and projections of log-concave functions}

\author[L. J. Al\'ias]{Luis J. Al\'ias}
\email{ljalias@um.es}
\address{Department of Mathematics. Universidad de Murcia, Spain}

\author[B. Gonz\'alez Merino]{Bernardo Gonz\'alez Merino}
\email{bgmerino@um.es}
\address{Department of Engineering and Technology of Computers, area of Applied Mathematics.
Universidad de Murcia, 
Spain}

\author[B. Mar\'in Gimeno]{Beatriz Mar\'in Gimeno}
\email{b.maringimeno@um.es}
\address{Department of Mathematics. Universidad de Murcia, Spain}

\subjclass[2020]{Primary 52A20, Secondary  26D10, 46E35, 52A40}
\keywords{$s$-cover, log-concave functions, projections, Sobolev inequalities, isoperimetry, irreducible}

\thanks{This research has been partially supported by the grants  PID2025-167696NB-I00, and PID2025-169631NB-I00, both of them funded by MICIN/AEI, Spain. The third author is also supported by Contratos
Predoctorales FPU-Universidad de Murcia 2024, Spain.}

\date{\today}

\begin{abstract}
Higher order sharp Sobolev and local Sobolev type inequalities for log-concave functions are established. The new inequalities directly imply the isoperimetric inequalities by Bollob\'as and Thomason in 1995, by Liakopoulos in 2019, and the recently discovered local isoperimetric inequalities from \cite{AGM26}. In order to do so, we make use of three ingredients. First, sets of functional inequalities proven by Alonso-Gutiérrez, Bernués, Brazitikos and Carbery in 2021. Second, a multidimensional polynomial optimization result that originated in \cite{AGM26} and was initiated by Bollob\'as and Thomason. Third, a new integral formula for the projection of a log-concave function that links the geometric part with the more functional versions via some dependence on the Hessian matrix restricted to subspaces.
\end{abstract}

\maketitle

\section{Introduction}

Sobolev inequality lies in the interface between analysis and geometry \cite{O78,T76}. This is a reformulation of the classical isoperimetric inequality. It states that for every $C^1(\mathbb R^n)$ function $f$ with compact support there exists a characteristic function $g$ of a Euclidean ball such that 
\[
\int_{\mathbb R^n}f(x)^\frac{n}{n-1}dx = \int_{\mathbb R^n} g(x)^{\frac{n}{n-1}}dx
\quad\text{and}\quad 
\int_{\mathbb R^n}\|\nabla f(x)\|_2dx \geq \int_{\mathbb R^n} \|\nabla g(x)\|_2dx.
\]
Above $\nabla f$ denotes the gradient of $f$, and for every $x\in\mathbb R^n$, $\|x\|_2:=\sqrt{\sum_{i=1}^n x_i^2}$ denotes the Euclidean norm of $x$.  

Log-concave functions play a crucial role in asymptotic geometric analysis and convex geometry \cite{KM05}. A function $f:\mathbb R^n\rightarrow[0,\infty)$ is a log-concave function if $\log f$ is a concave function, i.e., if $f((1-\lambda)x+\lambda y)\geq f(x)^{1-\lambda}f(y)^\lambda$ for every $x,y\in\mathbb R^n$ and $\lambda\in[0,1]$. If a log-concave function has finite integral, we say it is integrable. We call $\mathcal F(\mathbb R^n)$ the set of log-concave integrable functions whose domain is $\mathbb R^n$. Log-concave functions naturally extend the space of convex bodies $\mathcal K^n$, i.e.~convex and compact sets with non-empty interior of $\mathbb R^n$, either by means of characteristic functions or by exponential functions of the form
\[
\begin{array}{cccc}
    f: & \mathbb R^n & \longrightarrow & [0,\infty)  \\
     & x & \rightsquigarrow & f(x):=e^{-\|x\|_K},
\end{array}
\]
where $0\in K\in\mathcal K^n$ and $\|x\|_K:=\inf\{\rho\geq 0:x\in\rho K\}$ is the Minkowski functional of $K$. Moreover, we say that $K\subseteq\mathbb R^n$ is strictly convex if $(1-\lambda)x+\lambda y\in\mathrm{int}(K)$ for every $x,y\in K$ such that $x\neq y$ and where $\mathrm{int}(K)$ means the interior of $K$.

Several efforts have been made to extend convex geometry to the space of log-concave functions (see \cite{AGM25, AAGJV,AGV23, B88,C06,LSW21,L22}). These efforts intertwined with several Sobolev type results (see \cite{A79,B10,Bu26,CLM25,CENV04,Ga58,H09,1HLPRY25,2HLPRY25,HL25,L86,LYZ00, Ni59,X07,Z91,Z99}).

For any $\sigma\subseteq[n]:=\{1,2,\dots,n\}$, we denote by $H_{\sigma}:=\langle\{e_i:i\in\sigma\} \rangle$ be the linear hull of $\{e_i:i\in\sigma\}$, where $\{e_1,\dots,e_n\}$ is the canonical basis of $\mathbb R^n$. Log-concave functions can naturally be projected onto linear subspaces. If $f$ is a log-concave function within $\mathbb R^n$ and $H$ is a linear subspace, let
\[
P_Hf(x):=\sup\left\{f(x+y):y\in H^\bot\right\}
\]
be the orthogonal projection of $f$ onto $H$.
If $f\in\mathcal F(\mathbb R^n)$ and $H$ is a linear subspace, then $P_Hf\in\mathcal F(H)$ (see \cite[Pg.~178]{KM05} for further details). This notion generalizes the usual orthogonal projections $P_H(K)$ of any $K\in\mathcal K^n$ onto any given subspace $H$. In fact, $P_H e^{-\|\cdot\|_K} = e^{-\|\cdot\|_{P_HK}}$ for every $K\in\mathcal K^n$ with $0\in K$ and $H$ any linear subspace. Moreover, for a  function $f:\mathbb R^n\to\mathbb R^m$ such that $f\in\mathcal{C}^1(\mathbb R^n,\mathbb R^m)$ we denote by $J_{H_\tau}f$ the Jacobian matrix of $f=(f_1,\dots,f_m)$ computed in $H_\tau$ for some $\tau \subseteq[n]$, i.e.,
$$\left(J_{H_\tau}f\right)_{i,j}:=\partial_jf_i$$ for every $j\in\tau$ and $i\in[m]$. In particular, if $m=1$, it is the gradient of $f$ computed in $H_\tau$ and it is denoted by 
$\nabla_{H_\tau}f$.
What is more, if $f:\mathbb R^n\to\mathbb R$ is a function such that $f\in\mathcal{C}^2(\mathbb R^n)$, the Hessian matrix computed with respect to $H_\tau$ is denoted by $D^2_{H_\tau}f$, i.e.,
\[
(D^2_{H_\tau}f)_{i,j}:=\partial^2_{ij}f
\]
for every $i,j\in\tau$. For $\tau\subseteq\sigma\subseteq[n]$, we define the operator
\[
    M(f,\sigma,\tau)(x):= \frac{\mathrm{det}\left(\left.-D^2_{H_{([n]\setminus\sigma)\cup\tau}^\bot}f(x)\right|_{\nabla_{H_{([n]\setminus\sigma)\cup\tau}^\bot}f(x)^\bot\cap H_{([n]\setminus\sigma)\cup\tau}^\bot}\right)}{ \left\Vert \nabla_{H_{([n]\setminus\sigma)\cup\tau}^\bot}f(x)\right\Vert_2^{|\sigma|-|\tau|-2}},
    \]
    where $f\in\mathcal{C}^2(\mathbb R^n)$ and such that $\left\Vert \nabla_{H_{([n]\setminus\sigma)\cup\tau}^\bot}f(x)\right\Vert_2\neq 0.$ Notice that  $D^2F(x)|_{\nabla F(x)^\bot}$ is the Hessian matrix of $F$ restricted to $\nabla F(x)^\bot$ for some $F\in\mathcal{C}^2(\mathbb R^n)$, i.e. if $\langle u_i:i=1,\ldots,k\rangle$ is an orthonormal basis of $\nabla F(x)^\bot,$ then
 \[
 D^2F(x)|_{\nabla F(x)^\bot}=U^tD^2F(x)U
 \] 
 where $U=\left(u_1,\ldots,u_k\right)$.

Bollob\'as and Thomason \cite{BT95} introduced the notion of covers. Let $\sigma\subseteq[n]$ and let $s\geq 1$. We say that 
the tuple $(\sigma_1,\dots,\sigma_m)$ is a $s$-cover of $\sigma$ if $\sigma_i\subseteq\sigma$, $i=1,\dots,m$, and $|\{j:i\in\sigma_j\}|=s$ for every $i\in\sigma$.

Alonso-Gutiérrez, Bernués, Brazitikos and Carbery \cite[Thm. 4.2]{ABBC21} 
proved a functional version of the classical Bollobás-Thomason inequality \cite[Thm.~2]{BT95}
\begin{equation}\label{eq: functional ineq proyecciones BT}
    \Vert f\Vert_\infty^{m-s}\left(\int_{\mathbb{R}^n}f(x)dx\right)^s\leq\frac{(n!)^s}{\prod_{j=1}^m|\sigma_j|!}\prod_{j=1}^m\int_{H_{\sigma_j}}P_{H_{\sigma_j}}f(x)dx,
\end{equation}
where $f\in\mathcal{F}(\mathbb{R}^n)$ and $(\sigma_1,\ldots,\sigma_m)$ is a $s$-cover of $[n].$ Moreover, if $f=e^{-\|\cdot\|_B}$, where $B$ is a coordinate box centered at the origin (i.e. a convex body of the form $B=\prod_{i=1}^n[-a_i,a_i])$, equality holds above.

 Our first result is a Sobolev type inequality where the extremizer is a log-concave function of the form $e^{-\|\cdot\|_B}$ for some box $B$ depending on $f$. To do so, let us introduce the class of functions
\[
\mathcal{Q}K(\mathbb R^n):=\{f:\mathbb R^n\to[0,\infty): L_{f,t} \text{ is convex and }\kappa_{\text{bd}(L_{f,t})}>0,\, \forall t \in (0,\Vert f\Vert_\infty)\},
\]
where $L_{f,t}:=\{x\in\mathbb R^n:f(x)\geq t\}$ for every $t\in(0,\Vert f\Vert_\infty)$ and $\kappa_{\text{bd}(L_{f,t})}$ is the Gauss-Kronecker curvature of $\text{bd}(L_{f,t})$. 

\begin{thm}\label{thm:SobolevBTineq}
    Let $f\in\mathcal{F}(\mathbb{R}^n)\cap \mathcal{C}^2(\mathbb{R}^n)\cap \mathcal{Q}K(\mathbb R^n)$ be such that $\Vert f\Vert_\infty<\infty$ and, for almost every $x\in\mathrm{supp}(f)$, $\nabla_{H_{\{j\}}} f(x)\neq 0$ for every $j\in[n]$. Then, there exists a coordinate box $B_f$ such that 
    \[
    \int_{\mathbb{R}^n}\frac{f(x)}{\Vert f\Vert_\infty}dx=\int_{\mathbb{R}^n}e^{-\Vert x\Vert_{B_f}}dx
    \]
    and
    \[
\frac{1}{\|f\|_\infty}\int_{\mathbb R^n} M(f,[n],\tau)(x) dx \geq \int_{\mathbb R^n} M\left(e^{-\|\cdot\|_{B_f}},[n],\tau\right)(x) dx
\]
    for any $\tau\subset[n].$
\end{thm}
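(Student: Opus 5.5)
The plan has three steps: identify $\int M(f,[n],\tau)$ with the integral of a projection, compute both sides for the box function, and compare them using \eqref{eq: functional ineq proyecciones BT} together with the polynomial optimization result from \cite{AGM26}.

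\textbf{Step 1: an integral formula for projections.} With $\sigma=[n]$ we have $([n]\setminus\sigma)\cup\tau=\tau$. So $M(f,[n],\tau)$ is built from the gradient and Hessian of $f$ restricted to $H_\tau^\bot=H_{[n]\setminus\tau}$, divided by $\|\nabla_{H_\tau^\bot}f\|_2^{\,n-|\tau|-2}$. I would prove
\[
\int_{\mathbb R^n} M(f,[n],\tau)(x)\,dx \;=\; c_{n,|\tau|}\int_{H_\tau}P_{H_\tau}f(y)\,dy ,
\]
where $c_{n,|\tau|}$ depends only on the dimensions.
\begin{itemize}
\item Use the layer-cake representation $\int_{H_\tau} P_{H_\tau}f=\int_0^{\|f\|_\infty}|P_{H_\tau}L_{f,t}|\,dt$, which holds because the level sets of $P_{H_\tau}f$ are the projections $P_{H_\tau}L_{f,t}$.
\item Fix a fibre $x+H_\tau^\bot$. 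The assumption $f\in\mathcal QK(\mathbb R^n)$ makes the sections of $L_{f,t}$ by these fibres strictly convex with positive curvature.
\item Apply the coarea formula on the fibre to $f$ restricted there. The quotient $\det(-D^2 f|_{\nabla^\bot})/\|\nabla f\|^{k-1}$, with $k=n-|\tau|$, is the Gauss–Kronecker curvature of the level set inside the fibre. Its integral over that level set is the measure of its Gauss image, i.e.\ the volume of $S^{k-1}$, by the Gauss map degree.
\item The remaining power of $\|\nabla f\|$ absorbs the coarea Jacobian. Integrating in $t$ then gives a fibrewise quantity of the form $\int \mathbf 1\{f \ge t \text{ somewhere on the fibre}\}\,dt$, which equals $P_{H_\tau}f(y)$. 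Fubini over $y\in H_\tau$ gives the formula.
\end{itemize}
The hypothesis $\nabla_{H_{\{j\}}}f\neq0$ a.e.\ ensures $\nabla_{H_\tau^\bot}f\neq0$ a.e., so $M$ is defined. Dividing by $\|f\|_\infty$ normalizes the height.

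\textbf{Step 2: the box side.} Choose $B_f=\prod[-a_i,a_i]$ with the volume normalization $n!\prod 2a_i=\int f/\|f\|_\infty$. For $e^{-\|\cdot\|_B}$ the same formula applies, after smoothing or a limiting argument since the box is not $\mathcal C^2$, or by direct computation. It gives $c_{n,|\tau|}\,|\tau|!\prod_{i\in\tau}2a_i$. So the theorem reduces to
\[
\frac{1}{\|f\|_\infty}\int_{H_\tau}P_{H_\tau}f \;\ge\; |\tau|!\prod_{i\in\tau}2a_i \quad\text{for every }\tau .
\]

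\textbf{Step 3: choosing the box (main obstacle).} One box $B_f$ with the prescribed volume must dominate all the projections at once. Here I would invoke the multidimensional polynomial optimization result from \cite{AGM26}, in the spirit of Bollobás–Thomason's box theorem. Its input is the numbers $p_\sigma=\frac{1}{\|f\|_\infty}\int P_{H_\sigma}f$. These satisfy the uniform cover inequalities \eqref{eq: functional ineq proyecciones BT} for every $s$-cover of every $\sigma$, applied to $P_{H_\sigma}f$ in $H_\sigma$ and using $\|P_{H_\sigma}f\|_\infty=\|f\|_\infty$. Its output should be $a_i$ with $|\sigma|!\prod_{i\in\sigma}2a_i\le p_\sigma$ for all $\sigma$ and equality at $\sigma=[n]$. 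I expect the main difficulty to lie in checking that the normalized quantities fit the hypotheses of that optimization result exactly, including the factorial weights. I also expect care to be needed in justifying Step 1 rigorously: the Gauss-map degree argument on non-compact fibres, and the a.e.\ nondegeneracy.
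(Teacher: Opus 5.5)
Your proposal is correct and follows the paper's route: Step 1 is exactly Lemma \ref{lemma:intprojection}, with $c_{n,|\tau|}=S(\mathbb B^{n-|\tau|})$, proved by the same coarea formula plus the total Gauss--Kronecker curvature of the fibrewise level sets. Steps 2--3 are Theorem \ref{thm: iso proyec}, where the box comes from Theorem \ref{thm:AlgebraicTool} applied to $C_\tau=\frac{1}{|\tau|!}\int_{H_\tau}P_{H_\tau}(f/\|f\|_\infty)$; this factorial normalization makes \eqref{eq: functional ineq proyecciones BT}, applied to $P_{H_\tau}f$, yield exactly the Type-(1) and Type-(2) constraints, so the fit you were worried about is automatic.
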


The right-hand side of the second inequality in Theorem \ref{thm:SobolevBTineq} is understood as the value of
$M(e^{-\|\cdot\|_{B_f}},[n],\tau)$ obtained as the limit of smooth approximations, i.e. $g_k\in\mathcal{F}(\mathbb{R}^n)\cap \mathcal{C}^2(\mathbb{R}^n)\cap \mathcal{Q}K(\mathbb R^n)$ such that
\[
g_k \underset{k\to\infty}{\longrightarrow} e^{-\|\cdot\|_{B_f}} \quad\text{and}\quad\int_{H_\tau}P_{H_\tau }g_k(x)dx\underset{k\to\infty}{\longrightarrow}\int_{H_\tau}P_{H_\tau} e^{-\|x\|_{B_f}}dx.
\]
Moreover, the limit value coincides with
\[
|\tau|!S(\mathbb B^{n-|\tau|})\mathrm{vol}_{|\tau|}\left(P_{H_\tau}B_f\right),
\]
where $S(\mathbb B^{n-|\tau|})$ denotes the surface area of  $\mathbb B^{n-|\tau|}$, and $\mathrm{vol}_{|\tau|}\left(P_{H_\tau}B_f\right)$ denotes the $|\tau|$-dimensional volume of $P_{H_\tau}B_f$.
Let us observe that when $f=e^{-\|\cdot\|_K}$ for some $K\in\mathcal K^n$ with $0\in K$, then Theorem \ref{thm:SobolevBTineq} becomes the isoperimetric type result \cite[Thm.~1]{BT95}. 

Again the same authors proved (see \cite[Thm. 5.1]{ABBC21}) a functional dual to \eqref{eq: functional ineq proyecciones BT} which generalizes Liakopoulos result \cite{L19} (see also \cite[Thm.~1.4]{L19}). They showed that 
\begin{equation}\label{eq: ABBC funcional secciones}
    \left(n^n\int_{\mathbb{R}^n}f^n(x)dx\right)^s\geq\prod_{j=1}^m|\sigma_j|^{|\sigma_j|}\int_{H_{\sigma_j}}f^{|\sigma_j|}(x)dx,
\end{equation}
where $f\in\mathcal{F}(\mathbb{R}^n)$ and $(\sigma_1,\ldots,\sigma_m)$ is a $s$-cover of $[n].$ Moreover, if $f=e^{-\|\cdot\|_C}$, where $C$ is a crosspolytope centered at the origin (i.e. a convex body of the form $C=\mathrm{conv}(\{\pm\lambda_ie_i:\,i=1,\ldots,n\})$ where $\mathrm{conv}(\cdot)$ means the convex hull), equality holds above. 

Our second result is a Sobolev type result where the extremizer is a function of the form $e^{-\|\cdot\|_C}$ with $C$ a certain crosspolytope. 
\begin{thm}\label{thm:SobolevLiakopoulos}
    Let $f\in\mathcal{F}(\mathbb{R}^n)$ such that $0\in\mathrm{int}(\mathrm{supp}(f))$. Then, there exists $C_f$  a crosspolytope, depending on $f$, such that 
    \[
\int_{\mathbb{R}^n}f^n(x)dx=\int_{\mathbb{R}^n}\left(e^{-\Vert x\Vert_{C_f}}\right)^ndx
    \]
    and
    \[
\int_{H_\tau}f^{|\tau|}(x)dx\leq\int_{H_\tau}\left(e^{-\Vert x\Vert_{C_f}}\right)^{|\tau|}dx
    \]
    for any $\tau\subseteq [n]$.
\end{thm}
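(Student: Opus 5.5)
The plan is to reduce Theorem~\ref{thm:SobolevLiakopoulos} to a finite-dimensional feasibility problem for the numbers
\[
a_\tau:=|\tau|^{|\tau|}\int_{H_\tau}f^{|\tau|}(x)\,dx,\qquad \tau\subseteq[n],\quad a_\emptyset:=1 .
\]
We then solve that problem by linear programming duality, using \eqref{eq: ABBC funcional secciones} as the only analytic input.

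\textbf{Step 1: positivity and finiteness of $a_\tau$.} Since $0\in\mathrm{int}(\mathrm{supp}(f))$, the restriction of $f$ to each $H_\tau$ is positive on a neighbourhood of $0$ in $H_\tau$. Moreover $f$ is log-concave and integrable, so it is bounded and decays exponentially, and hence so is each of these restrictions. Therefore $0<a_\tau<\infty$ for every $\tau$.

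\textbf{Step 2: the model computation.} Let $C=\mathrm{conv}(\{\pm\lambda_ie_i\})$ and $k=|\tau|$. Then $C\cap H_\tau$ is the crosspolytope in $H_\tau$ with $\mathrm{vol}_k(C\cap H_\tau)=2^k\prod_{i\in\tau}\lambda_i/k!$. Using $\int_{\mathbb R^k}e^{-k\|x\|_K}dx=k!\,\mathrm{vol}_k(K)/k^k$, we get
\[
|\tau|^{|\tau|}\int_{H_\tau}\left(e^{-\|x\|_{C}}\right)^{|\tau|}dx=\prod_{i\in\tau}(2\lambda_i).
\]
So, writing $y_i:=\log(2\lambda_i)$ and $b_\tau:=\log a_\tau$, the theorem is equivalent to finding $y\in\mathbb R^n$ with
\[
\sum_{i\in[n]}y_i=b_{[n]}\quad\text{and}\quad \sum_{i\in\tau}y_i\ge b_\tau\ \text{ for all }\tau\subseteq[n].
\]
Given such $y$, we set $\lambda_i=e^{y_i}/2$, and then $C_f:=\mathrm{conv}(\{\pm\lambda_ie_i\})$ has both required properties.

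\textbf{Step 3: solving the linear system by Farkas' lemma.} This is the Bollob\'as--Thomason type step, and the main obstacle. Consider the LP of minimizing $\sum_{i\in[n]}y_i$ subject to $\sum_{i\in\tau}y_i\ge b_\tau$ for all $\tau$. If its value is at least $b_{[n]}$, a solution exists. Indeed, the constraint for $\tau=[n]$ forces value $\ge b_{[n]}$, and a minimizer attains it. If the value were $>b_{[n]}$, we could take a minimizer and decrease a coordinate, but this must be done without breaking the other constraints, so a cleaner route is via the dual.

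The dual LP is to maximize $\sum_\tau c_\tau b_\tau$ over $c_\tau\ge0$ with $\sum_\tau c_\tau\mathbf 1_\tau=\mathbf 1_{[n]}$. By strong duality (the primal is feasible, since large $y$ works, and the dual is feasible, since $c_{[n]}=1$), the primal optimum equals the dual optimum. So it suffices to show $\sum_\tau c_\tau b_\tau\le b_{[n]}$ for every dual-feasible $c$.

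Since the constraint matrix has $0/1$ entries, the dual polytope has rational vertices, so it suffices to check rational $c$. Write $c_\tau=m_\tau/s$ with $m_\tau\in\mathbb Z_{\ge0}$. Then the tuple listing each $\tau$ exactly $m_\tau$ times is an $s$-cover of $[n]$; empty sets contribute nothing since $a_\emptyset=1$. Applying \eqref{eq: ABBC funcional secciones} to this cover gives $a_{[n]}^s\ge\prod_\tau a_\tau^{m_\tau}$. Taking logarithms and dividing by $s$ yields exactly $\sum_\tau c_\tau b_\tau\le b_{[n]}$. Hence the primal optimum is $b_{[n]}$, and any minimizer $y$ has $\sum_{[n]}y_i=b_{[n]}$ together with all the inequalities, which completes the proof.

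The points that need care are that \eqref{eq: ABBC funcional secciones} is applied to covers with repeated sets, which is allowed since covers are tuples, and that the finiteness and positivity of Step 1 legitimately permit taking logarithms.
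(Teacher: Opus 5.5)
Your proof is correct, and it takes a genuinely different route from the paper's.

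The paper feeds $C_\tau=|\tau|^{|\tau|}\int_{H_\tau}f^{|\tau|}$ into the reversed form of its polynomial optimization tool, Theorem~\ref{thm:AlgebraicTool}. To show that $(C_\tau)$ lies in the constraint set, it uses \eqref{eq: ABBC funcional secciones} twice. First it applies it to covers of $[n]$, which gives the Type-$(1)$ inequalities. Then it applies it again to the restrictions $f|_{H_\tau}$, which gives the Type-$(2)$ inequalities $C_\tau\ge\prod_{j\in\tau}C_{\{j\}}$. It then obtains an extremal (maximal) tuple via Zorn's lemma. Through the equality-propagation argument with irreducible covers, it shows this tuple is multiplicative. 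The resulting crosspolytope has semi-axes $t_{\{i\}}/2$, matching your normalization $\lambda_i=e^{y_i}/2$.

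You replace all of that by logarithms plus LP strong duality. The rationality of the dual vertices then turns dual feasibility into exactly the $s$-cover inequalities for covers of $[n]$. This buys several things:
\begin{itemize}
\item your route is shorter and rests only on standard LP facts;
\item it never restricts $f$ to the subspaces $H_\tau$, so the Type-$(2)$ verification disappears;
\item it sidesteps the delicate minimality argument in Theorem~\ref{thm:AlgebraicTool}(iii);
\item it makes clear that the uniform-cover inequalities on $[n]$ alone are all that is needed.
\end{itemize}
The paper's device, in return, avoids duality theory and is built once and reused for Theorems~\ref{thm:SobolevBTineq}, \ref{thm:localSobolevBT} and \ref{thm:ProductMeasure}. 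Your argument would adapt to those just as easily, since the relevant quantities are positive and can be log-linearized the same way.

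One cosmetic fix is needed in Step 3. The primal value is always at least $b_{[n]}$; what you need is that it \emph{equals} $b_{[n]}$. So delete the sentence ``if its value is at least $b_{[n]}$, a solution exists'' and the abandoned coordinate-decreasing idea, and let the duality argument carry the step. It is also cleaner to drop $\tau=\emptyset$ from the LP outright, since its constraint is just $0\ge 0$.
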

Observe that when $f=e^{-\|\cdot\|_K}$ for some $K\in\mathcal K^n$ with $0\in K$, then Theorem \ref{thm:SobolevLiakopoulos} becomes the isoperimetric type result \cite[Thm.~1.3]{L19}.

The third set of inequalities, proven by the same authors in \cite[Thm.~4.3]{ABBC21}, are local versions of \eqref{eq: functional ineq proyecciones BT},
\begin{equation}\label{eq: funcional local BT}
\begin{split}
        \left(\int_{H_\sigma^\bot}P_{H_\sigma^\bot}f(x)dx\right)^{m-s} &  \left(\int_{\mathbb{R}^n}f(x)dx\right)^s \\ & \leq\frac{|\sigma|!^s}{\prod_{j=1}^m|\sigma_j|!}\prod_{j=1}^m\int_{H_\sigma^\bot\oplus H_{\sigma_j}}P_{H_\sigma^\bot\oplus H_{\sigma_j}}f(x)dx,
\end{split}
\end{equation}
where $f\in\mathcal F(\mathbb R^n)$, $\sigma\subset[n]$,  $(\sigma_1,\dots,\sigma_m)$ is a $s$-cover of $\sigma$, and $H_\sigma^\bot$ is the orthogonal subspace to $H_\sigma$.
These functional inequalities recover the geometric inequalities \cite[Thm.~1.5]{ABBC21} when considering $f=e^{-\Vert\cdot\Vert_K}$ for some $K\in\mathcal{K}^n$ containing the origin.

Our third result is a local Sobolev type result where the extremizer is a function of the form $e^{-\|\cdot\|_K}$ with $K$
a certain convex hull of a lower dimensional box and another lower dimensional set on the orthogonal complement to the subspace that contains the box.

\begin{thm}\label{thm:localSobolevBT}
    Let $\sigma\subset[n]$. Let $f\in\mathcal{F}(\mathbb{R}^n)\cap \mathcal{C}^2(\mathbb{R}^n)\cap \mathcal{Q}K(\mathbb R^n)$ be such that $\Vert f\Vert_\infty<\infty$, $\nabla_{H_{\{j\}}} f(x)\neq 0$, for every $j\in\sigma$, and for almost every $x\in \mathrm{supp}(f)$. Then, there exists a certain convex body
    \[K_f=\mathrm{conv}\left(\left\{\sum_{i\in\sigma}c_i[-e_i,e_i],L
    \right\}\right),\]
     where $c_i>0$, $i\in\sigma$ and $L$ is a $(n-|\sigma|)$-dimensional convex body contained in $H_\sigma^\bot$ with $0\in L$
    such that   
    \[
    \int_{\mathbb{R}^n}f(x)dx=\int_{\mathbb{R}^n}e^{-\Vert x\Vert_{K_f}}dx,
    \]
    \[
    \int_{\mathbb R^n}M(f,\sigma,\tau)(x)dx\geq \int_{\mathbb R^n}M\left(e^{-\Vert \cdot\Vert_{K_f}},\sigma,\tau\right)(x)dx
    \]
    for every $\tau\subset\sigma$ and
    \[
    \int_{\mathbb R^n}M(f,\sigma,\emptyset)(x)dx=\int_{\mathbb R^n}M\left(e^{-\Vert x\Vert_{K_f}},\sigma,\emptyset\right)(x)dx.
    \]
\end{thm}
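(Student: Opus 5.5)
The plan mirrors the proof of Theorem~\ref{thm:SobolevBTineq}, with the local inequality \eqref{eq: funcional local BT} in place of \eqref{eq: functional ineq proyecciones BT}. It has three parts: convert the Sobolev integrals into integrals of projections, build $K_f$ from the projection data, and close with the multidimensional polynomial optimization result of \cite{AGM26}.

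\textbf{Step 1: an integral formula for $M(f,\sigma,\tau)$.} Write $W_\tau:=H_{([n]\setminus\sigma)\cup\tau}^\bot=H_{\sigma\setminus\tau}$, so that $M(f,\sigma,\tau)$ only involves derivatives in the $|\sigma|-|\tau|$ directions of $W_\tau$. Fix $y\in W_\tau^\bot=H_\sigma^\bot\oplus H_\tau$ and look at the level sets of $f(y+\cdot)$ inside $W_\tau$. These are convex with positive Gauss--Kronecker curvature because $f\in\mathcal QK(\mathbb R^n)$. By the coarea formula and the Gauss map parametrization, $\det(-D^2 f|_{\nabla f^\bot})/\|\nabla f\|^{k-2}$ with $k=\dim W_\tau$ is the Jacobian that converts the integral over $W_\tau$ into an integral over $S^{k-1}$ times the level parameter. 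Integrating in $y$ and in the level $t$ should then give
\[
\int_{\mathbb R^n}M(f,\sigma,\tau)\,dx=c_{k}\int_{H_\sigma^\bot\oplus H_\tau}P_{H_\sigma^\bot\oplus H_\tau}f(y)\,dy,
\]
with a constant $c_k$ depending only on $k=|\sigma|-|\tau|$, essentially $S(\mathbb B^{k})$ times a factorial. This is the ``new integral formula for the projection'' announced in the abstract, presumably proved in the paper before Theorem~\ref{thm:SobolevBTineq}, and I would invoke it as such. The hypothesis $\nabla_{H_{\{j\}}}f\neq0$ a.e.\ for $j\in\sigma$ guarantees the denominators are nonzero almost everywhere. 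For $\tau=\emptyset$ the right-hand side is $c_{|\sigma|}\int_{H_\sigma^\bot}P_{H_\sigma^\bot}f$.

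\textbf{Step 2: construction of $K_f$.} Set $a:=\int_{H_\sigma^\bot}P_{H_\sigma^\bot}f$, $V:=\int f$, and $a_{\tau}:=\int_{H_\sigma^\bot\oplus H_\tau}P f$ for $\tau\subseteq\sigma$. For $K=\mathrm{conv}(\sum_{i\in\sigma}c_i[-e_i,e_i],L)$ the functions $e^{-\|\cdot\|_K}$ have explicit projection integrals. First, $P_{H_\sigma^\bot}e^{-\|\cdot\|_K}=e^{-\|\cdot\|_L}$, so its integral is $(n-|\sigma|)!\,\mathrm{vol}(L)$. Second, the projection onto $H_\sigma^\bot\oplus H_\tau$ is $e^{-\|\cdot\|_{\mathrm{conv}(B_\tau,L)}}$, where $B_\tau$ is the box $\sum_{i\in\tau}c_i[-e_i,e_i]$, so its integral is $(n-|\sigma|+|\tau|)!\,\mathrm{vol}(\mathrm{conv}(B_\tau,L))$. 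This volume is a product formula in $\mathrm{vol}(L)$ and $\prod_{i\in\tau}2c_i$ with binomial-type constants. We also have $\int e^{-\|\cdot\|_K}=n!\,\mathrm{vol}(K)$. I choose $\mathrm{vol}(L)$ so that the $\tau=\emptyset$ identity holds; this is exactly the equality statement in Theorem~\ref{thm:localSobolevBT}. I then choose the $c_i$ (only their product matters for the total volume) so that $\int e^{-\|\cdot\|_{K_f}}=V$.

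\textbf{Step 3: the inequalities for $\tau\neq\emptyset$.} By Step~1, the remaining claim is $a_\tau\ge \int P_{H_\sigma^\bot\oplus H_\tau}e^{-\|\cdot\|_{K_f}}$ for every $\tau\subset\sigma$. Apply \eqref{eq: funcional local BT} to all $s$-covers of $\sigma$ built from the subsets $\tau$. Since $a$ and $V$ are matched by $K_f$, these give a family of multiplicative constraints on the numbers $a_\tau$. Equality holds in each constraint for the model functions (the local analogue of the box case), so the model values are extremal. The key input is the polynomial optimization lemma of \cite{AGM26}, generalizing Bollob\'as--Thomason. It asserts that vectors $(a_\tau)$ satisfying all cover inequalities dominate coordinatewise a ``box-type'' vector $(\prod_{i\in\tau}x_i)\cdot(\text{constant})$ with the same total. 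Applying it produces $c_i$ whose product is already fixed by Step~2, and this makes $K_f$ the extremizer.

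\textbf{Main obstacle.} I expect the hardest step to be this last matching. The cover inequalities \eqref{eq: funcional local BT} carry factorial weights $|\sigma|!^s/\prod|\sigma_j|!$, while the model projections carry the $\mathrm{conv}(B_\tau,L)$ volume constants. One must check that these constants fit the normalization in which the \cite{AGM26} lemma is stated, so that the dominated vector can be realized by actual $c_i>0$ with $K_f$ of the stated form. I would first verify this in the case $|\sigma|=2$ and then do the general bookkeeping.
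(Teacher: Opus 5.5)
Your plan is correct and is essentially the paper's proof. Lemma~\ref{lemma:intprojection}, applied with $([n]\setminus\sigma)\cup\tau$ in place of $\sigma$, gives $\int_{\mathbb R^n}M(f,\sigma,\tau)\,dx=S(\mathbb B^{|\sigma|-|\tau|})\int_{H_\sigma^\bot\oplus H_\tau}P_{H_\sigma^\bot\oplus H_\tau}f$ (with no factorial), which reduces the statement to Theorem~\ref{thm:LocalBTprojection}; that theorem is proved exactly as you outline, by fixing $(n-|\sigma|)!\,\mathrm{vol}(L)=a$, applying Theorem~\ref{thm:AlgebraicTool} to $C_\tau:=\frac{a_\tau}{|\tau|!\,a}$ for $\tau\subseteq\sigma$, and taking $c_i=t_{\{i\}}/2$.

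The normalization you flag as the main obstacle is immediate. Dividing \eqref{eq: funcional local BT} by $a^m$ gives precisely $C_\sigma^s\le\prod_j C_{\sigma_j}$, which are the Type-$(1)$ constraints via complementary covers. The Type-$(2)$ constraints $C_\tau\le\prod_{j\in\tau}C_{\{j\}}$ for $\tau\subsetneq\sigma$ cannot come from covers of $\sigma$ alone; they follow by applying \eqref{eq: funcional local BT} to $P_{H_\sigma^\bot\oplus H_\tau}f$ with the singleton cover of $\tau$.
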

 The right-hand side of the second inequality and third equality in Theorem \ref{thm:localSobolevBT} are understood as the value of
$M\left(e^{-\|\cdot\|_{K_f}},\sigma,\tau\right)$ obtained as the limit of smooth approximations like in Theorem \ref{thm:SobolevBTineq}.
Moreover, the limit value of the right-hand side of the second equation in Theorem \ref{thm:localSobolevBT} coincides with
\[(n-|\sigma|)!|\tau|!
S(\mathbb B^{|\sigma|-|\tau|})\mathrm{vol}_{n-|\sigma|}\left(L\right)2^{|\tau|}\prod_{i\in\tau}c_i
\]
and limit value of the right-hand side of the third equation is equal to
\[
S(\mathbb B^{|\sigma|})(n-|\sigma|)!\mathrm{vol}_{n-|\sigma|}(L).
\]
Let us observe that when $f=e^{-\|\cdot\|_K}$ for some $K\in\mathcal K^n$ with $0\in K$, then Theorem \ref{thm:localSobolevBT} becomes the local isoperimetric type result \cite[Thm.~1.1]{AGM26}. Notice also that the $(n-|\sigma|)$-dimensional convex body contained in $H_\sigma^\bot$, $L$, is not unique.

Notice that we actually prove slightly broader inequalities than the ones in Theorems \ref{thm:SobolevBTineq} and \ref{thm:localSobolevBT}. Their connection emanates from the new identity 
\begin{equation}\label{eq: diferential formula integral projection}
\int_{H_\sigma}P_{H_\sigma}f(y)dy=\frac{1}{S(\mathbb B^{n-|\sigma|})}\scalebox{1.5}{$\displaystyle\int$}_{\mathbb R^n}\frac{\mathrm{det}\left(\left.-D^2_{H_\sigma^\bot}f(x)\right|_{\nabla_{H_\sigma^\bot} f(x)^\bot\cap H_\sigma^\bot}\right)}{\Vert \nabla_{H_\sigma^\bot} f(x)\Vert_2^{n-|\sigma|-2}}dx,
  \end{equation}
where $\sigma\subseteq[n]$, $f\in\mathcal{C}^2(\mathbb{R}^n)\cap \mathcal{Q}K(\mathbb R^n)$ is an integrable function such that $\Vert f\Vert_\infty<\infty$ and, for almost every $x\in\mathrm{supp}(f)$, $\nabla_{H_\sigma^\bot} f(x)\neq 0$. 
In particular, for $f\in\mathcal{F}(\mathbb R^n)\cap\mathcal{ C}^1(\mathbb R^n)$ with compact support and $\sigma=[n]\setminus\{j\}$, for some $j\in[n]$, \eqref{eq: diferential formula integral projection} recovers the well known identity given by 
\begin{equation}\label{eq:FunctCauchy}
    2\int_{H_{\{j\}}^\bot}P_{H_{\{j\}}^\bot}f(x)dx = \|\nabla_{H_{\{j\}}}f\|_1,
\end{equation}
where $\nabla_{H_{\{j\}}}f$ denotes the partial derivative of $f$ in the direction $e_j$ (see \cite{AGM15, BCF14, Z99}). Therefore, it extends the functional version of the Cauchy surface area formula \eqref{eq:FunctCauchy} to projections onto subspaces of lower dimension. Many authors have worked with the integral of the projection of a log-concave function but from different points of view such as the functional version of a quermassintegral, see \cite{BCF14, CH00, CH05, CF13, CLM25, L22,Tn26}.


Bollob\'as and Thomason mentioned in their paper that \cite[Thm.~1]{BT95} is also true for general product measures. The proof in their work covers in detail the case of the Lebesgue measure. However, their proof strongly depends on the homogeneity of the Lebesgue measure, fact which is generally not true for general measures. For the sake of completeness and clearness, we provide here an exhaustive proof of that fact.

Product measures play a crucial role in asymptotic convex geometry. For instance,  processes such as Brownian motions, Markov chains in the independent case, Wiener measure, the Shannon entropy, or even machine learning require product measures to be built.

Let $\mu_j$ be a continuous measure over $\mathbb R$ (i.e. $\mu_j(\{x\})=0$ for every $x\in\mathbb R$) for every $j=1,\dots,n$. We then define the product measure $\underset{j\in[n]}{\otimes}\mu_j:=\mu_1\otimes\cdots\otimes\mu_n$ to be
\[
\mu_1\otimes\cdots\otimes\mu_n:\prod_{j=1}^n\mathrm{supp}(\mu_j)\rightarrow[0,\infty) \quad\text{s.t.}\quad \underset{j\in[n]}{\otimes}\mu_j \left(\prod_{j=1}^n L_j\right) :=\prod_{j=1}^n\mu_j(L_j),
\]
where $L_j\subset\mathbb R$ is a $\mu_j$-measurable set for every $j=1,\dots,n$.

Bollob\'as and Thomason pointed out that for every $K\in\mathcal K^n$ and $(\sigma_1,\dots,\sigma_m)$ be a $s$-cover of $[n]$, then
\begin{equation}\label{eq:BTprodMeas}
    \mu_1\otimes\cdots\otimes \mu_n(K)^s \leq \prod_{i=1}^m\underset{j\in\sigma_i}{\otimes} \mu_j(P_{H_{\sigma_i}}K).
\end{equation}
As a consequence of the same technique employed above, we can also derive \cite[Thm.~1]{BT95}.

\begin{thm}\label{thm:ProductMeasure}
Let $\mu_j$ be a continuous measure on $\mathbb R$, $j=1,\dots,n$, and $K\in\mathcal K^n$ such that $K\subseteq\mathrm{supp}(\mu_1\otimes\cdots\otimes\mu_n)$. Then there exists a coordinate box $B_K\subseteq\mathrm{supp}(\mu_1\otimes\cdots\otimes\mu_n)$ such that
    \[\mu_1\otimes\cdots\otimes\mu_n(K)=\mu_1\otimes\cdots\otimes\mu_n(B_K)\]
    and 
    \[\underset{j\in\tau}{\otimes}\mu_j(P_{H_\tau}K)\geq\underset{j\in\tau}{\otimes}\mu_j(P_{H_\tau}B_K)\]
    for any $\tau\subseteq[n].$    
\end{thm}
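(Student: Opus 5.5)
Our plan follows Bollob\'as--Thomason's strategy: combine the product-measure inequality \eqref{eq:BTprodMeas} with the multidimensional polynomial optimization result from \cite{AGM26}. The homogeneity of Lebesgue measure is replaced by a change of variables that reduces the problem to Lebesgue measure on a cube.

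First we record, for every $\tau\subseteq[n]$, the numbers
\[
x_\tau:=\underset{j\in\tau}{\otimes}\mu_j(P_{H_\tau}K),\qquad x_\emptyset:=1,\qquad x_{[n]}=\mu_1\otimes\cdots\otimes\mu_n(K).
\]
We also check that this family satisfies all the cover inequalities needed. For any $\sigma\subseteq[n]$ and any $s$-cover $(\sigma_1,\dots,\sigma_m)$ of $\sigma$, we want $x_\sigma^s\le\prod_i x_{\sigma_i}$. This follows by applying \eqref{eq:BTprodMeas} in the space $H_\sigma$ to the convex set $P_{H_\sigma}K$ with the measures $\mu_j$, $j\in\sigma$, using $P_{H_{\sigma_i}}P_{H_\sigma}K=P_{H_{\sigma_i}}K$. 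If $P_{H_\sigma}K$ is only a convex body of $H_\sigma$, no degeneracy issue arises, since $K$ has nonempty interior.

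Next we use the combinatorial core of \cite[Thm.~1]{BT95} and the optimization result of \cite{AGM26}. A family $(x_\tau)$ satisfying all local cover inequalities admits numbers $a_1,\dots,a_n\ge0$ such that
\[
\prod_{j\in[n]}a_j=x_{[n]}\quad\text{and}\quad \prod_{j\in\tau}a_j\le x_\tau\ \text{ for all }\tau .
\]
The proof goes by induction on $n$ via an irreducible decomposition: choose a minimal $\sigma$ with $x_{[n]}=x_\sigma x_{[n]\setminus\sigma}$, or use uniform scaling when no such $\sigma$ exists. This step is purely about the numbers $x_\tau$, so the measure plays no role.

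Finally we convert $(a_j)$ into a box. Each $\mu_j$ is continuous, so the function $t\mapsto\mu_j([c_j,t])$ is continuous and nondecreasing on $\mathrm{supp}(\mu_j)$. Fix a base point $c_j$ inside the projection $P_{H_{\{j\}}}K$, which lies in $\mathrm{supp}(\mu_j)$. Because $a_j\le x_{\{j\}}=\mu_j(P_{H_{\{j\}}}K)$, the intermediate value theorem gives an interval $I_j\subseteq P_{H_{\{j\}}}K\subseteq\mathrm{supp}(\mu_j)$ with $\mu_j(I_j)=a_j$. Setting $B_K=\prod_j I_j$, we get $\mu_1\otimes\cdots\otimes\mu_n(B_K)=\prod_j a_j=x_{[n]}$. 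Moreover, $P_{H_\tau}B_K=\prod_{j\in\tau}I_j$ has measure $\prod_{j\in\tau}a_j\le x_\tau$, which is the claim.

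We expect the main obstacle to be the second step, namely ensuring that the polynomial optimization result really only needs the inequalities $x_\sigma^s\le\prod_i x_{\sigma_i}$ over covers of subsets $\sigma$, and not homogeneity. We would try to verify the induction of \cite{BT95} directly in this form. A secondary point is the degenerate case where some $\mu_j(P_{H_{\{j\}}}K)=\infty$ or some $a_j=0$. We would handle this by noting that $K$ is compact, so all $x_\tau$ are finite, and $K$ has interior inside the support, so all $x_\tau>0$.
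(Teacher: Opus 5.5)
Your Steps 1 and 2 coincide with the paper's proof. The paper feeds $C_\tau=\underset{j\in\tau}{\otimes}\mu_j(P_{H_\tau}K)$ into Theorem~\ref{thm:AlgebraicTool}; these numbers satisfy \eqref{eq:BTprodMeas} for $K$ and for its coordinate projections. The minimal tuple then gives numbers of exactly your kind, $a_j=t_{\{j\}}$, with $\prod_{j\in\tau}a_j=t_\tau\le C_\tau$ and equality for $\tau=[n]$. You should simply invoke that theorem, because the induction you propose for verifying it fails as stated.

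Gluing solutions across a tight bipartition $x_{[n]}=x_\sigma x_{[n]\setminus\sigma}$ yields only $\prod_{j\in C}a_j\le x_{C\cap\sigma}x_{C\setminus\sigma}$ for sets $C$ meeting both parts. The cover inequality gives $x_C\le x_{C\cap\sigma}x_{C\setminus\sigma}$, which points the wrong way. For instance, take $n=3$, $x_{\{2\}}=x_{\{3\}}=2$ and all other $x_\tau=1$. Every cover inequality holds, the only tight bipartition is $\{1\}\,|\,\{2,3\}$, and $(a_2,a_3)=(2,1/2)$ is admissible on $\{2,3\}$. Yet $a_1a_2=2>x_{\{1,2\}}$.

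The other branch of your dichotomy does not close either. With $x_{\{i\}}=2$ and $x_{\{i,j\}}=x_{[3]}=1$, no bipartition is tight, but the $2$-cover $(\{1,2\},\{1,3\},\{2,3\})$ is, so no shrinking is possible.

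If you want a self-contained argument shorter than the minimal-element proof of Theorem~\ref{thm:AlgebraicTool}, use linear programming in logarithmic coordinates. The maximum of $\sum_j\alpha_j$ subject to $\sum_{j\in\tau}\alpha_j\le\log x_\tau$ equals, by duality, the minimum of $\sum_\tau y_\tau\log x_\tau$ over $y\ge0$ with $\sum_{\tau\ni j}y_\tau=1$ for all $j$. The vertices of this polytope are rational, i.e.\ rescaled $s$-covers of $[n]$. So \eqref{eq:BTprodMeas} for $K$ alone makes this value $\log x_{[n]}$, and $a_j=e^{\alpha_j}$ at a maximizer works.

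Your Step 3 differs from the paper's and is the more robust of the two. The paper takes centered intervals $[-G_j^{-1}(t_{\{j\}}),G_j^{-1}(t_{\{j\}})]$ with $G_j(x)=\mu_j([-x,x])$, which relies on $G_j$ being invertible onto an interval. Your intervals inside $P_{H_{\{j\}}}K$ come from the intermediate value theorem and need only that $\mu_j$ has no atoms and is finite on compact sets. They also give $B_K\subseteq\prod_jP_{H_{\{j\}}}K\subseteq\mathrm{supp}(\mu_1\otimes\cdots\otimes\mu_n)$ for free. A box centered at the origin need not lie in the support when the supports are not symmetric about $0$ (e.g.\ $\mathrm{supp}(\mu_j)=[1,2]$). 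You only give up centering, which the statement does not require.

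One small fix: anchor at the left endpoint $\alpha_j$ of $P_{H_{\{j\}}}K=[\alpha_j,\beta_j]$, not at an interior point. Then $t\mapsto\mu_j([\alpha_j,t])$ increases continuously from $0$ to $x_{\{j\}}\ge a_j$ on $[\alpha_j,\beta_j]$.
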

This enables remarkable cases of isoperimetric inequalities for product measures as the case of the Gaussian measure.

Let us introduce further notation. For every $X\subset\mathbb R^n$, let $\mathrm{conv}(X)$ be the convex hull of $X$. Moreover, for every $x,y\in\mathbb R^n$, let $[x,y]:=\mathrm{conv}(\{x,y\})$ be the line segment of endpoints $x$ and $y$. For every $K,C\in\mathcal K^n$, let $K+C:=\{x+y:x\in K,\,y\in C\}$ be the Minkowski addition of $K$ and $C$. For every $\lambda \in\mathbb R$, let $\lambda K:=\{\lambda x:x\in K\}$ and let $-K:=(-1)K$. 

The paper is organized as follows. In Section \ref{sec:AlgebraicMethod} we extend an idea developed in \cite{AGM26} to a wider setting. The idea moves around a multidimensional polynomial optimization technique. It plays a crucial role in proving all other results of the paper. In Section \ref{Sec: integral formula for PHf} we prove the integral formula given in \eqref{eq: diferential formula integral projection} together with some technical lemmas. In Section \ref{sec:SobolevInequs} we prove the Sobolev type inequalities explained in Theorems \ref{thm:SobolevBTineq}, \ref{thm:SobolevLiakopoulos}, and \ref{thm:localSobolevBT}. In Section \ref{sec:ProdMeasure} we prove Theorem \ref{thm:ProductMeasure} by using again the technique developed in Section \ref{sec:AlgebraicMethod}. Finally in Section \ref{sec:furtherRemarks} we leave some open questions on the topic.




\section{Multidimensional polynomial optimization tool}\label{sec:AlgebraicMethod}
This whole section is dedicated to deriving a tool that was originally implemented by Bollob\'as and Thomason \cite{BT95}, used in the dual case by Liakopoulos \cite{L19}, and recently extended in \cite{AGM26}. Here we provide a more general version, that can be used either for geometrical or analytical results.

Here we present some basic notions of covers. We say that $(\sigma_1,\dots,\sigma_m)$ is a cover of $\sigma\subseteq[n]$ if $(\sigma_1,\dots,\sigma_m)$ is a $s$-cover of $\sigma$ for some $s\geq 1$. We say that any $s$-cover $(\sigma_1,\dots,\sigma_m)$ of $\sigma\subseteq[n]$ is irreducible if $(\sigma_1,\dots,\sigma_m)$ cannot be separated into two subsets forming each of them a cover of $\sigma$ (see \cite{BT95}). It is known that the number of irreducible $s$-covers of $[n]$ is finite (see \cite{BT95}). 

In the following, we use the definitions below through all the section. Let us fix $\sigma\subseteq[n]$, and let $C_\tau \geq 0$ for every $\tau\subseteq\sigma$. Let us consider the set $\mathcal C_\sigma$ given by all tuples $(x_\tau)_{\tau\subseteq\sigma}$ fulfilling
\begin{equation}\label{eq:Algebraic}
\left\{\begin{array}{ll}
    x_\sigma = C_\sigma &  \\
    &\\
    x_{\sigma\setminus\tau} \leq C_{\sigma\setminus\tau}  & \\
    &\\
    x_\sigma^{m-s} \leq \prod_{j=1}^m x_{\sigma\setminus\sigma_j} & \forall (\sigma_1,\dots,\sigma_m)\text{ irreducible }s\text{-cover of }\sigma \\
    &\\
    x_\tau \leq \prod_{j\in\tau} x_{\{j\}}&\text{ for any $\tau\subseteq\sigma.$}
\end{array}\right.
\end{equation}
We call the set of inequalities on the third line as Type-$(1)$, and the set of inequalities on the fourth line as Type-$(2)$. Notice that due to the finiteness of irreducible $s$-covers of $\sigma\subseteq[n]$, the set of inequalities above is finite. 

Now we are considering the following order relation. Given $(x_{\tau})_{\tau\subseteq\sigma}, (y_{\tau})_{\tau\subseteq\sigma}\in\mathcal{C}_\sigma$, let us define the order 
\[
(x_\tau)_{\tau\subseteq\sigma}\prec (y_\tau)_{\tau\subseteq\sigma} \quad :\Leftrightarrow \quad x_\tau\leq y_\tau\text{  for all }\tau\subseteq\sigma.
\]
Then, $(\mathcal{C}_\sigma,\prec)$ is a (partially) ordered set. 

\begin{thm}\label{thm:AlgebraicTool}
    Let $\sigma\subseteq[n]$ and let $\mathcal C_\sigma$ be defined by \eqref{eq:Algebraic}. If $\mathcal C_\sigma\neq\emptyset$, then it follows:
    \begin{itemize}
        \item[(i)] There exist $C(\sigma),D(\sigma)>0$ such that for every $(x_\tau)_{\tau\subseteq\sigma}\in\mathcal C_\sigma$ we have that $C(\sigma) \leq x_\tau \leq D(\sigma)$ for every $\tau\subseteq\sigma$.
        \item[(ii)] There exists a tuple $(t_\tau)_{\tau\subseteq\sigma} \in \mathcal C_\sigma$ which is minimal for the order $\prec$. 
        \item[(iii)] For every $\tau\subseteq\sigma$ we have that $t_\tau = \prod_{j\in\tau}t_{\{j\}}$.
    \end{itemize}
\end{thm}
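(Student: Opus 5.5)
The plan is to establish (i) from a few well-chosen inequalities in \eqref{eq:Algebraic}, deduce (ii) by a compactness argument, and obtain (iii) by a perturbation argument at the minimal tuple; (iii) is the genuinely combinatorial part. I will assume $C_\sigma>0$, as the positivity in (i) forces. I also use the conventions $x_\emptyset\le 1$, coming from the empty product in Type-$(2)$, and that any subset of $\sigma$ can be written as $\sigma\setminus\tau$.

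For (i), the upper bound is immediate. Every $\rho\subseteq\sigma$ is of the form $\sigma\setminus\tau$, so the second line of \eqref{eq:Algebraic} gives $x_\rho\le C_\rho$, and we may take $D(\sigma):=\max_{\rho\subseteq\sigma}C_\rho$ (enlarged to be at least $1$ and at least $C_\sigma$).

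For the lower bound, take $\emptyset\neq\rho\subsetneq\sigma$. The pair $(\rho,\sigma\setminus\rho)$ is a $1$-cover of $\sigma$. It is irreducible, because a $1$-cover cannot be split into two covers. Type-$(1)$ with $m=2$, $s=1$ therefore reads $x_\sigma\le x_{\sigma\setminus\rho}\,x_\rho$. This yields
\[
x_\rho\ \ge\ C_\sigma/x_{\sigma\setminus\rho}\ \ge\ C_\sigma/D(\sigma).
\]
The index $\rho=\sigma$ is fixed at $C_\sigma$. For $\rho=\emptyset$ I would use the cover $(\sigma,\emptyset)$ in the same way, or otherwise adjust the convention for $x_\emptyset$. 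Then $C(\sigma):=\min\{C_\sigma,\,C_\sigma/D(\sigma)\}$ works.

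For (ii), note that $\mathcal C_\sigma$ is defined by finitely many non-strict polynomial inequalities, so it is closed. By (i) it is contained in $[C(\sigma),D(\sigma)]^{2^{|\sigma|}}$, hence compact, and it is nonempty by hypothesis. The continuous function $\Phi\big((x_\tau)_{\tau\subseteq\sigma}\big)=\sum_{\tau\subseteq\sigma}x_\tau$ therefore attains its minimum at some $(t_\tau)_{\tau\subseteq\sigma}\in\mathcal C_\sigma$. This tuple is minimal for $\prec$: if $y\prec t$ with $y\in\mathcal C_\sigma$ and $y\ne t$, then $\Phi(y)<\Phi(t)$, a contradiction.

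For (iii), Type-$(2)$ already gives $t_\tau\le\prod_{j\in\tau}t_{\{j\}}$, so the reverse inequality is what must be shown. First I would record a rigidity property of $t$. Decreasing a single coordinate $t_\rho$ with $\rho\ne\sigma$ keeps the second line and all Type-$(2)$ inequalities having $x_\rho$ on the left valid. Hence, by minimality and continuity, some inequality with $x_\rho$ on the \emph{right-hand side} must be tight. This is either a Type-$(2)$ inequality $t_\tau=\prod_{j\in\tau}t_{\{j\}}$ with $\rho=\{j\}\subseteq\tau$, or a Type-$(1)$ inequality
\[
t_\sigma^{m-s}=\prod_{j=1}^m t_{\sigma\setminus\sigma_j}
\]
for an irreducible cover with $\sigma\setminus\sigma_i=\rho$ for some $i$.

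Then I would argue by induction on $|\tau|$ in the style of Bollob\'as--Thomason. Take a tight irreducible cover containing $\sigma_1=\sigma\setminus\tau$. Combine it with the Type-$(1)$ inequalities obtained by replacing $\sigma_1$ by $\sigma_1\cup\{j\}$ for $j\in\tau$, and adjoining the singleton complements $\sigma\setminus\{j\}$, so that the result is still a cover, possibly after reduction to irreducible pieces. This produces $t_\tau\ge\prod_{j\in\tau}t_{\{j\}}$, using the inductive equalities for smaller sets. If instead the tight constraint at some coordinate is of Type-$(2)$, I would try a simultaneous perturbation: decrease $t_\tau$ together with suitable singletons, scaling them by $\lambda^{|\tau\cap\cdot|}$ for $\lambda<1$. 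The aim is to check that every Type-$(1)$ inequality stays valid, since both sides scale by the same power because $(\sigma_j)$ is an $s$-cover, and this would contradict minimality.

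I expect this last step, showing that tightness propagates from Type-$(1)$ constraints to the product identity, to be the main obstacle. It requires a careful bookkeeping of how the multiplicities of an $s$-cover balance under the scaling $x_\rho\mapsto\lambda^{|\rho|}x_\rho$. This scaling leaves Type-$(2)$ invariant, and it affects the two sides of Type-$(1)$ by $\lambda^{(m-s)|\sigma|}$ and $\lambda^{\sum_j(|\sigma|-|\sigma_j|)}=\lambda^{(m-s)|\sigma|}$ respectively. That exact balance is the reason I expect the argument to close.
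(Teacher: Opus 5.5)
Your parts (i) and (ii) are sound. The two-set covers $(\rho,\sigma\setminus\rho)$ give the lower bound more cheaply than the paper's cover $\{\sigma\setminus\tau\}\cup\{\{i\}:i\in\tau\}$, and minimizing $\sum_\tau x_\tau$ over the compact set $\mathcal C_\sigma$ is a cleaner substitute for the paper's Zorn argument. The gap is (iii): you leave it open, and both mechanisms you sketch fail.

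\textbf{The cover-replacement step bounds the wrong side.} Suppose $t_\sigma^{m-s}=t_\tau\prod_{j\ge 2}t_{\sigma\setminus\sigma_j}$ is tight. Compare it with the valid inequality for the cover in which $\sigma\setminus\tau$ is replaced by $\sigma\setminus(\tau\setminus\{j\})$ and $\sigma\setminus\{j\}$. This yields $t_\tau\le t_{\tau\setminus\{j\}}\,t_{\{j\}}$, and by iterating or by your induction, $t_\tau\le\prod_{j\in\tau}t_{\{j\}}$. A block occurring in a tight inequality can only be bounded from \emph{above} this way, so you merely recover Type-$(2)$, not its reverse.

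\textbf{The scaling perturbation is not feasible.} The map $x_\rho\mapsto\lambda^{|\rho|}x_\rho$ balances Type-$(1)$ only because it also rescales $x_\sigma$, which is pinned to $C_\sigma$. If you freeze $x_\sigma$, or scale only $t_\tau$ and the singletons of $\tau$, then the right-hand sides shrink while the left-hand sides do not. This affects tight Type-$(1)$ constraints, $C_\sigma\le\prod_j t_{\{j\}}$, and Type-$(2)$ constraints for other sets meeting $\tau$. So nothing guarantees the perturbed tuple stays in $\mathcal C_\sigma$, and no contradiction with minimality follows.

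\textbf{The missing idea} is to extract tight covers at the singletons and to prove the top case $\tau=\sigma$ first. Note that Type-$(1)$ holds for all covers, since you can multiply the irreducible pieces. The steps are:

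For each $i\in\sigma$, minimality of $t_{\{i\}}$ gives one of two things. Either there is a tight cover containing $\sigma\setminus\{i\}$. Or there is a tight Type-$(2)$ relation $t_{\tau(i)}=\prod_{j\in\tau(i)}t_{\{j\}}$ with $i\in\tau(i)$ and $|\tau(i)|\ge 2$.

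In the second case, if $\tau(i)=\sigma$ you already have $t_\sigma=\prod_j t_{\{j\}}$. Otherwise, minimality of $t_{\tau(i)}$ yields a tight cover containing $\sigma\setminus\tau(i)$. Replacing that set by the sets $\sigma\setminus\{j\}$, $j\in\tau(i)$, keeps the relation tight and makes the cover contain $\sigma\setminus\{i\}$.

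Multiplying these $|\sigma|$ tight relations gives a tight cover $\beta$ containing every $\sigma\setminus\{i\}$. Delete that block, which is a $(|\sigma|-1)$-cover with $|\sigma|$ sets contributing $\prod_i t_{\{i\}}$. What remains is a cover $\beta'$ whose inequality has exponent $m-s-1$. Comparing the tight relation for $\beta$ with the inequality for $\beta'$ gives $t_\sigma\ge\prod_{i\in\sigma}t_{\{i\}}$, hence equality.

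Finally, for $\tau\subset\sigma$, the cover $(\sigma\setminus\tau)\cup(\sigma\setminus\{i\}:i\in\sigma\setminus\tau)$ gives $t_\sigma\le t_\tau\prod_{i\in\sigma\setminus\tau}t_{\{i\}}$. Combined with the top identity, this is $t_\tau\ge\prod_{i\in\tau}t_{\{i\}}$.

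An induction upward from singletons cannot supply these reverse inequalities. What is needed is the top identity $t_\sigma=\prod_{i\in\sigma}t_{\{i\}}$.
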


\begin{proof}
We start proving $(i)$. Let us notice that from the second inequality in \eqref{eq:Algebraic} we have that 
$x_\tau \leq C_\tau$, for every $\tau\subseteq\sigma$, and let $D(\sigma):=\max\{C_\tau:\tau\subseteq\sigma\}$. For a fixed $\tau\subseteq\sigma$, consider the irreducible $1$-cover $\{\sigma\setminus\tau\}\cup\{i:i\in\tau\}$ of $\sigma$. By the Type-$1$ inequality in \eqref{eq:Algebraic} we obtain that
\[
x_\sigma^{|\tau|}=x_\sigma^{|\tau|+1-1} \leq x_\tau\left(\prod_{i\in\tau}x_{\sigma\setminus\{i\}}\right),
\]
for any $(x_\tau)_{\tau\subseteq\sigma}\in\mathcal{C}_\sigma.$
Then
\[
x_\tau \geq \frac{x_\sigma^{|\tau|}}{\prod_{i\in\tau}x_{\sigma\setminus\{i\}}} \geq 
\frac{x_\sigma^{|\tau|}}{\prod_{i\in\tau}C_{\sigma\setminus\{i\}}} =: c_\tau.
\]
Finally, let $C(\sigma):=\min\{c_\tau:\tau\subseteq\sigma\}$. Then for every $(x_\tau)_{\tau\subseteq\sigma}\in\mathcal C_\sigma$ we have that $C(\sigma) \leq x_\tau \leq D(\sigma)$ for every $\tau\subseteq\sigma$. 

We now show $(ii)$. Let $(x^\ell_\tau)_{\tau\subseteq\sigma}\in\mathcal C_\sigma$ be a decreasing chain on $\ell$, i.e.~a decreasing sequence on $\ell$ with respect to $\prec$. We aim to show that we find an infimum of this sequence within $\mathcal C_\sigma$. Notice that $x^\ell_\tau$ is a real decreasing sequence on $\ell$ for every $\tau\subseteq\sigma$. Thus, let $x^0_\tau:=\inf x^\ell_{\tau}=\lim_{\ell\rightarrow\infty}x^\ell_\tau$. It remains to show that $(x_\tau^0)_{\tau\subseteq\sigma}\in\mathcal C_\sigma$. The fact that the sequence $x^0_\tau$, $\tau\subseteq\sigma$, fulfills first and second inequalities of \eqref{eq:Algebraic} is obvious. Let us consider $(\sigma_1,\dots,\sigma_m)$ be a $s$-cover of $\sigma$. For every $\ell\in\mathbb N$ we have that
\[
(x^\ell_\sigma)^{m-s} \leq \prod_{j=1}^m x^\ell_{\sigma\setminus\sigma_j}.
\]
Since the sequence $x^\ell_\tau$ converges to $x^0_\tau$, for every $\tau\subseteq\sigma$, and the inequality above is determined by (continuous) polynomials, we thus get that 
\[
(x^0_\sigma)^{m-s} \leq \prod_{j=1}^m x^0_{\sigma\setminus\sigma_j}
\]
holds true, hence showing the Type-$1$ inequalities in \eqref{eq:Algebraic}. An analogous argument shows that $(x^0_\tau)_{\tau\subseteq\sigma}$ fulfills every inequality of Type-$2$ in \eqref{eq:Algebraic}. Thus, we obtain that $(x_\tau^0)_{\tau\subseteq\sigma}\in\mathcal C_\sigma$. Therefore, by Zorn's Lemma we obtain the existence of a minimal element $(t_\tau)_{\tau\subseteq\sigma}\in\mathcal C_\sigma$.

We finally prove $(iii)$. By the minimality of the tuple $(t_\tau)_{\tau\subseteq\sigma} \in \mathcal C_\sigma$, for each $i\in\sigma$ there exists an inequality of Type-$(1)$ or Type-$(2)$ involving $t_{\{i\}}$ in which equality holds. If for some $i\in\sigma$ there is equality in an inequality of Type-$(1)$ this means that there exists $\beta(i):=(\sigma_1^i,\ldots,\sigma_{m_i}^i)$ an irreducible $s_i$-cover of $\sigma$ with $\sigma_{j_0}^i=\sigma\setminus\{i\}$ for some $j_0\in\{1,\ldots,m_i\}$ such that
\[t_\sigma^{m_i-s_i}=\prod_{j=1}^{m_i}t_{\sigma\setminus\sigma^i_j}=t_{\{i\}}\prod_{j\in [m_i]\setminus\{j_0\}}t_{\sigma\setminus\sigma^i_j}.\]
If for some $i\in\sigma$ there is equality in an inequality of Type-$(2)$ this means that there exists a subset $\tau(i) $ of $\sigma$ such that $i\in\tau(i)$ and 
\begin{equation}\label{eq: equality type 2}
    t_{\tau(i)}=\prod_{j\in\tau(i)}t_{\{j\}}.
\end{equation}
Moreover, by the minimality of $(t_\tau)_{\tau\subseteq\sigma}$
there exists $(\sigma^i_1,\ldots,\sigma^i_{m_i})$ an irreducible $s_i$-cover of $\sigma$ such that $\sigma^i_{j_0}=\sigma\setminus\tau(i)$ and with equality in an inequality of Type-$(1)$. Otherwise we 
could replace $t_{\tau(i)}$ by a strictly smaller value
verifying all the conditions 
imposed in \eqref{eq:Algebraic}.
Therefore,
\begin{eqnarray}\label{eq: equality type 2 reduction}
    t_\sigma^{m_i-s_i}&=&\prod_{j=1}^{m_i}t_{\sigma\setminus\sigma^i_j}=t_{\tau(i)}\prod_{j\in [m_i]\setminus\{j_0\}}t_{\sigma\setminus\sigma^i_j}\nonumber\\
    &=&\left(\prod_{j\in\tau(i)}t_{\{j\}}\right)\left(\prod_{j\in [m_i]\setminus\{j_0\}}t_{\sigma\setminus\sigma^i_j}\right),
\end{eqnarray}
where in the last equality we have used \eqref{eq: equality type 2}. Now, we notice that taking the complements in $\sigma$ of each element of a $s$-cover of $\sigma$, with $m$ sets, transforms it into a $(m-s)$-cover. Thus, $(\sigma\setminus\sigma^i_1,\ldots,\sigma\setminus\sigma^i_{m_i})$ is a $(m_i-s_i)$-cover of $\sigma$. Besides, since $\sigma\setminus\sigma^i_{j_0}=\tau(i)$, we have that $(\sigma\setminus\sigma^i_j:j\in[m_i]\setminus\{j_0\})\cup(\{j\}:j\in\tau(i))$ is also a $(m_i-s_i)$-cover of $\sigma$ with $m_i-1+|\tau(i)|$ sets. Therefore, $\beta(i):=(\sigma^i_j:j\in[m_i]\setminus\{j_0\})\cup(\sigma\setminus\{j\}:j\in\tau(i))$ is a $(s_i+|\tau(i)|-1)$-cover of $\sigma$ with equality in the inequality of Type-$(1)$ by \eqref{eq: equality type 2 reduction}.

Now we define $\beta:=\cup_{i\in\sigma}\beta(i)$. It is  a $s$-cover of $\sigma$ with 
\[
s=\sum_{i\in\mathcal{M}_1}s_i+\sum_{i\in\mathcal{M}_2}(s_i+|\tau(i)|-1),
\]
where $\mathcal{M}_1:=\{i\in\sigma: \text{ there exists }\,(\sigma^i_1,\ldots,\sigma^i_{m_i}) \text{ an irreducible }s_i\text{-cover of }\sigma \text{ with } \sigma^i_{j_0}=\sigma\setminus\{i\} \text{ for some } j_0\in[m_i]\text{ and equality in a Type-}(1) \text{ inequality}\}$
and $\mathcal{M}_2:=\sigma\setminus \mathcal{M}_1.$ The number of elements of $\beta$ is
\[
m=\sum_{i\in\mathcal{M}_1}m_i+\sum_{i\in\mathcal{M}_2}(m_i-1+|\tau(i)|).
\]
Then, we have equality in the inequality of Type-$(1)$ applied to the $s$-cover $\beta$ since
\begin{eqnarray*}
    t_\sigma^{m-s}&=&t_\sigma^{\sum_{i\in\mathcal{M}_1}(m_i-s_i)}t_\sigma^{\sum_{i\in\mathcal{M}_2}((m_i-1+|\tau(i)|)-(s_i+|\tau(i)|-1))}\nonumber\\
    &=&\left[\prod_{i\in\mathcal{M}_1}t_\sigma^{m_i-s_i}\right]\left[\prod_{i\in\mathcal{M}_2}t_\sigma^{m_i-s_i}\right]\nonumber\\
    &=&\left[\prod_{i\in\mathcal{M}_1}\left(\prod_{\tau\in\beta(i)}t_{\sigma\setminus\tau}\right)\right]\left[\prod_{i\in\mathcal{M}_2}\left(\prod_{\tau\in\beta(i)}t_{\sigma\setminus\tau}\right)\right]\nonumber\\
    &=&\prod_{\tau\in\beta}t_{\sigma\setminus\tau}.
\end{eqnarray*}
Considering $\beta'=\beta\setminus(\sigma\setminus\{i\}:i\in\sigma)$, which is a $(s-|\sigma|+1)$-cover of $\sigma$ with $m-|\sigma|$ elements, we can rewrite the equality above as
\begin{equation}\label{eq: eq equality beta}
    t_\sigma^{m-s}=\left(\prod_{\tau\in\beta'}t_{\sigma\setminus\tau}\right)\left(\prod_{i\in\sigma}t_{\{i\}}\right).
\end{equation}
Besides, applying inequality of Type-$(1)$ to the cover $\beta'$ (it can be also applied to reducible covers with the same steps done in \eqref{eq: eq equality beta} but with inequality instead of equality) 
we get that
\begin{equation}\label{eq: ineq beta'}
  t_\sigma^{m-s-1}\leq\prod_{\tau\in\beta'}t_{\sigma\setminus\tau}. 
\end{equation}
Therefore, from \eqref{eq: eq equality beta} and \eqref{eq: ineq beta'} it follows that
\[
t_\sigma\geq\prod_{i\in\sigma}t_{\{i\}}.
\]
On the other hand, taking $\tau=\sigma$ in the inequality of Type-$(2)$ we have the reverse inequality, $t_\sigma\leq\prod_{i\in\sigma}t_{\{i\}}$. Therefore, 
\[
t_\sigma=\prod_{i\in\sigma}t_{\{i\}}.
\]

Let now $\tau\subset\sigma$. Since $\tau\cup(\{i\}:i\in\sigma\setminus\tau)$ is a $1$-cover of $\sigma$ with $|\sigma|-|\tau|+1$ sets, we have that $(\sigma\setminus\tau)\cup(\sigma\setminus\{i\}:i\in\sigma\setminus\tau)$ is a $(|\sigma|-|\tau|)$-cover of $\sigma$. Then,
\[
t_\sigma\leq t_\tau\left(\prod_{i\in\sigma\setminus\tau}t_{\{i\}}\right)\leq\left(\prod_{i\in\tau}t_{\{i\}}\right)\left(\prod_{i\in\sigma\setminus\tau}t_{\{i\}}\right)=\prod_{i\in\sigma}t_{\{i\}}=t_\sigma,
\]
where we have used the inequalities of Type-$(1)$ and Type-$(2)$ in the first and the second inequalities above, respectively.
Finally, we deduce that \[
t_\tau=\prod_{i\in\tau}t_{\{i\}}.
\]
\end{proof}
\section{Integral formula for $P_{H_\sigma}f$}\label{Sec: integral formula for PHf}

Recall that  $\Sigma=\{x\in\mathbb{R}^n:F(x)=0,\,\mathrm{with }\,\,F:\mathbb{R}^n\to\mathbb R\}$ is a hypersurface on $\mathbb R^n$ of class $\mathcal C^2$ if $F\in\mathcal{C}^2(\mathbb R^n)$ and $\nabla F(x)\neq 0$, for every $x\in\Sigma$. The Gauss map of $\Sigma$ is the map $N:\Sigma\to\mathbb S^{n-1}$ that for every $x\in\Sigma$ gives its unitary normal vector and it is given by 
\[
N(x)=\frac{\nabla F(x)}{\Vert \nabla F(x)\Vert_2}.
\]
Notice that orientation induced by $N$ depends on the choice of the defining function $F$. In particular, if    \[
\Sigma=\mathrm{bd}(K) \quad \mathrm{ with }\quad K=\{x\in\mathbb R^n:F(x)\geq 0\}\in\mathcal{ K}^n,
\] then $N$ points towards the interior of $K$. Thus, in this setting, $N$ is the inward-pointing unit normal. Thus, from now on we adopt this convention.
The differential function of $N$ denoted by  $dN_x:T_x\Sigma\to T_{N(x)}\mathbb S^{n-1}\equiv T_x\Sigma$ is defined over tangent space of $\Sigma$  in $x$,
$$T_x\Sigma:=\left\{v\in\mathbb R^n:\left\langle v,\frac{\nabla F(x)}{\Vert \nabla F(x)\Vert_2}\right\rangle=0\right\}=\nabla F(x)^\bot.$$
The eigenvalues of $-dN_x$, in $x\in\Sigma$, will be denoted by $\{\kappa_1(x),\ldots,\kappa_{n-1}(x)\}$ and they are known to be the principal curvatures of $\Sigma$ in $x$. Besides, these eigenvalues have special interest since the Gauss-Kronecker curvature of $\Sigma$ in $x\in\Sigma$ is given by the product of the principal curvatures in $x$,
\[
\kappa_\Sigma(x):=\prod_{i=1}^{n-1}\kappa_i(x).
\]
 Besides, if the hypersurface $\Sigma=\mathrm{bd}(K)$ is orientable  with $K\in\mathcal K^n$, we have that $\kappa_\Sigma\geq 0$ with all the principal curvatures non-negative. Moreover, if $\kappa_\Sigma>0$ for $\Sigma=\mathrm{bd}(K)$ with $K\in\mathcal K^n$, then $K$ is strictly convex \cite{Go05, Sc14}. The Gauss-Kronecker curvature can also be computed as
 \begin{equation}\label{eq: Gauss curvature and -dNx}
    \kappa_\Sigma(x)=\mathrm{det}(-dN_x)=\frac{\mathrm{det}(-D^2F(x)|_{\nabla F(x)^\bot})}{\Vert \nabla F(x)\Vert_2^{n-1}},
 \end{equation}
 where $D^2F(x)|_{\nabla F(x)^\bot}$ is the Hessian matrix of $F$ restricted to $\nabla F(x)^\bot$, \cite{DC16, Go05}.

 The following result is classical. In fact, for $n=3$ it goes back to Gauss' notion of \emph{curvatura integra} as the area of the spherical image \cite{Ga1828}, together with Hadamard's theorem that a closed surface with positive Gaussian curvature bounds a convex body and has a bijective Gauss map \cite{Ha1897}. In arbitrary dimension it is the convex case of Hopf's theorem \cite{Ho1926}: for every closed hypersurface of $\mathbb{R}^n$, the total Gauss--Kronecker curvature equals $\deg(N)\,S(\mathbb{B}^n)$, and $\deg(N)=1$ when $\Sigma$ bounds a convex body (see also \cite{CL1957}).
\begin{lemma}\label{lem: integral curvature}
    Let $\Sigma\subset\mathbb{R}^n$ be a compact and  orientable hypersurface of class $\mathcal{C}^2$ such that $\Sigma=bd (K)$ with $K\in\mathcal{K}^n$ and $\kappa_\Sigma>0$. Then,
    \[\int_\Sigma \kappa_{\Sigma}(x)d\mathcal{H}^{n-1}(x)=S(\mathbb{B}^{n}),\]
    where $d\mathcal{H}^{n-1}$ is the $(n-1)$-dimensional Hausdorff measure.
\end{lemma}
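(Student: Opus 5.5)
The plan is to show that the Gauss map $N:\Sigma\to\mathbb S^{n-1}$ is a $\mathcal C^1$ diffeomorphism, and then read the integral of $\kappa_\Sigma$ as the area of the spherical image via the change of variables formula. Since $\kappa_\Sigma=\det(-dN_x)$ by \eqref{eq: Gauss curvature and -dNx}, and $\kappa_\Sigma>0$, we get $|\det dN_x|=\kappa_\Sigma(x)$. So the area formula gives
\[
\int_\Sigma \kappa_\Sigma(x)\,d\mathcal H^{n-1}(x)=\int_{\mathbb S^{n-1}} \#N^{-1}(u)\,d\mathcal H^{n-1}(u).
\]
It then suffices to prove that $N$ is a bijection, because $\mathcal H^{n-1}(\mathbb S^{n-1})=S(\mathbb B^n)$.

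The first step is local: since $\det(-dN_x)\neq 0$ everywhere, the inverse function theorem makes $N$ a local diffeomorphism, hence an open map. The second step is surjectivity. $N(\Sigma)$ is open, and it is compact as the continuous image of the compact set $\Sigma$. Since $\mathbb S^{n-1}$ is connected for $n\geq2$, this gives $N(\Sigma)=\mathbb S^{n-1}$. Alternatively, one can argue convexly: for each $u\in\mathbb S^{n-1}$, a point of $K$ minimizing $\langle\cdot,u\rangle$ lies on $\Sigma$, and there the inward normal is $u$, because the supporting hyperplane is the tangent hyperplane.

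The third step is injectivity, which I expect to be the main point. Suppose $N(x)=N(y)=u$. Since $K$ is convex and $\Sigma$ is $\mathcal C^1$, the tangent hyperplane at $x$ is the unique supporting hyperplane of $K$ at $x$. With the inward normal $u$, this means $x$ and $y$ both minimize $\langle\cdot,u\rangle$ over $K$. The set of minimizers is the convex face $K\cap\{\langle z,u\rangle=\langle x,u\rangle\}$, so it contains the segment $[x,y]$. But $\kappa_\Sigma>0$ implies that $K$ is strictly convex, as recalled above from \cite{Go05,Sc14}. Hence the midpoint of $[x,y]$ would lie in $\mathrm{int}(K)$ and on a supporting hyperplane at the same time, which is impossible unless $x=y$. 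Thus $N$ is bijective, and in fact a diffeomorphism, and the change of variables formula yields the claim.

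The only delicate point is checking the orientation convention and the identification $T_{N(x)}\mathbb S^{n-1}\equiv T_x\Sigma$. Both sides are the hyperplane $N(x)^\perp$, so the Jacobian of $N$ with respect to the Hausdorff measures is exactly $|\det dN_x|=\kappa_\Sigma(x)$, and the sign plays no role since $\kappa_\Sigma>0$.
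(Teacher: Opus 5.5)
Your proposal is correct and follows the same route as the paper: $\kappa_\Sigma=\det(-dN_x)>0$ together with the strict convexity of $K$ makes the Gauss map $N:\Sigma\to\mathbb S^{n-1}$ a global $\mathcal C^1$ diffeomorphism, and the change of variables formula then turns $\int_\Sigma\kappa_\Sigma\,d\mathcal H^{n-1}$ into $\mathcal H^{n-1}(\mathbb S^{n-1})=S(\mathbb B^n)$. The paper only asserts that $N$ is a diffeomorphism, while you justify it (inverse function theorem for the local part, then surjectivity and injectivity via supporting hyperplanes and strict convexity), so your version fills in details rather than taking a different route.
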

Recall that, for any $A\subseteq\mathbb R^n$, the $(n-1)$-dimensional Hausdorff measure is defined as
\[
\mathcal{H}^{n-1}(A)
=
\lim_{\delta\to0^+}
\mathcal{H}^{n-1}_{\delta}(A),
\]
where
\[
\mathcal{H}^{n-1}_{\delta}(A)
=
\frac{\mathrm{vol}_{n-1}\left(\mathbb B^{n-1}\right)}{2^{n-1}}
\inf\left\{
\sum_{j=1}^{\infty}
\bigl(\operatorname{diam} U_j\bigr)^{n-1}
:
A\subseteq\displaystyle\bigcup_{j=1}^{\infty}U_j,
\operatorname{diam} U_j\leq\delta
\right\},
\]
for $\delta>0$.


For the proof of Lemma \ref{lem: integral curvature}, observe that, since $\kappa_\Sigma=\det(-dN_x)>0$, the body $K$ is strictly convex and the Gauss map $N:\Sigma\to\mathbb{S}^{n-1}$ is a global $\mathcal{C}^1$ diffeomorphism. Hence, by the theorem of change of variables applied to $N$,
\[\int_{\Sigma}\kappa_\Sigma(x)d\mathcal{H}^{n-1}(x)=\int_\Sigma |\mathrm{det}(-dN_x)|d\mathcal{H}^{n-1}(x)=\int_{\mathbb S^{n-1}}d\mathcal{H}^{n-1}(x)=S(B^{n}).\]
In other words, the total Gauss--Kronecker curvature of $\Sigma$ equals the $(n-1)$-dimensional measure of its spherical image, namely $S(\mathbb{B}^n)$.

The proof of \eqref{eq: diferential formula integral projection} makes use of Federer's Coarea Formula \cite{F014},
\begin{equation}\label{eq: coarea formula}
    \int_{\mathbb{R}^n}\varphi(x)\Vert \nabla F(x) \Vert_2 dx=\int_{\mathbb{R}}\int_{\{x\in \mathbb{R}^n:F(x)=t\}}\varphi(x)d\mathcal{H}^{n-1}(x)dt,
\end{equation}
where the function $F:\mathbb{R}^n\to\mathbb{R}$ is locally Lipschitz and $\varphi:\mathbb{R}^n\to[0,\infty)$ is a measurable function. Then, we show the new integral formula for the projection of a log-concave function onto a linear coordinate subspace.
\begin{lemma}\label{lemma:intprojection}
Let $\sigma\subset[n]$. Let $f\in\mathcal{C}^2(\mathbb{R}^n)\cap \mathcal{Q}K(\mathbb R^n)$ be an integrable function such that $\Vert f\Vert_\infty<\infty$ and, for almost every $x\in \mathrm{supp}(f)$, $\nabla_{H_\sigma^\bot} f(x)\neq 0$. Then,
 \[
    \int_{H_\sigma}P_{H_\sigma}f(y)dy=\frac{1}{S(\mathbb B^{n-|\sigma|})}\scalebox{1.5}{$\displaystyle\int$}_{\mathbb R^n}\frac{\mathrm{det}\left(\left.-D^2_{H_\sigma^\bot}f(x)\right|_{\nabla_{H_\sigma^\bot} f(x)^\bot\cap H_\sigma^\bot}\right)}{\Vert \nabla_{H_\sigma^\bot} f(x)\Vert_2^{n-|\sigma|-2}}dx.
    \]
   \end{lemma}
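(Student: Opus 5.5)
Write $k:=n-|\sigma|$ and decompose $x=(y,z)$ with $y\in H_\sigma$ and $z\in H_\sigma^\bot\cong\mathbb R^{k}$. The plan is to apply Fubini and compute the inner integral over $H_\sigma^\bot$ separately for each fixed $y$. For fixed $y$, the slice $g_y(z):=f(y+z)$ is a $\mathcal C^2$ function on $H_\sigma^\bot$. Its gradient is $\nabla_{H_\sigma^\bot}f(y+z)$ and its Hessian is $D^2_{H_\sigma^\bot}f(y+z)$. Moreover, $P_{H_\sigma}f(y)=\sup_z g_y(z)=\|g_y\|_\infty$. It therefore suffices to prove the $k$-dimensional identity
\[
\int_{H_\sigma^\bot}\frac{\det\left(-D^2g_y(z)|_{\nabla g_y(z)^\bot}\right)}{\|\nabla g_y(z)\|_2^{k-2}}dz=S(\mathbb B^{k})\,\|g_y\|_\infty
\]
for almost every $y$. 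Integrating over $y\in H_\sigma$ then gives the lemma.

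To prove the slice identity, I would apply the coarea formula \eqref{eq: coarea formula} in $H_\sigma^\bot$ with $F=g_y$ and
\[
\varphi=\frac{\det(-D^2g_y|_{\nabla g_y^\bot})}{\|\nabla g_y\|_2^{k-1}},
\]
so that $\varphi\|\nabla g_y\|_2$ is exactly the integrand. By \eqref{eq: Gauss curvature and -dNx}, applied to $F=g_y-t$, the function $\varphi$ restricted to the level set $\{g_y=t\}$ is the Gauss--Kronecker curvature of the hypersurface $\mathrm{bd}(\{z:g_y(z)\ge t\})$. The inward normal is $\nabla g_y/\|\nabla g_y\|$, consistent with the convention adopted in the paper. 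The coarea formula then turns the integral into
\[
\int_0^{\|g_y\|_\infty}\int_{\{g_y=t\}}\kappa\, d\mathcal H^{k-1}\,dt.
\]
Lemma \ref{lem: integral curvature} evaluates the inner integral as $S(\mathbb B^k)$ for each $t\in(0,\|g_y\|_\infty)$, and the result is $S(\mathbb B^k)\|g_y\|_\infty$. For $t$ outside this range the level sets are empty, or are the maximum set, which has measure zero in $t$. The set $\{g_y=0\}$ contributes nothing, since the gradient vanishes there, or equivalently $t=0$ is a null set of levels.

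The main obstacle is verifying the hypotheses of Lemma \ref{lem: integral curvature} for the slice level sets. The set $\{z:g_y(z)\ge t\}=L_{f,t}\cap(y+H_\sigma^\bot)$ is a section of a convex set, hence convex. It is compact, because integrability of the quasi-concave $f$ forces bounded level sets. Its boundary is a $\mathcal C^2$ hypersurface wherever $\nabla g_y\neq0$, and the almost-everywhere hypothesis together with Sard's theorem in $t$ gives this for a.e.\ $t$. The delicate point is positivity of the curvature of the section. I would argue that a section of a convex body whose boundary has $\kappa>0$ has positive-definite second fundamental form: for a convex body, $\kappa>0$ means all principal curvatures are positive, and the second fundamental form of the section is the restriction of the ambient one to the tangent directions of the section. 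This restriction is positive definite wherever the section is transversal, which is the case precisely where $\nabla_{H_\sigma^\bot}f\neq0$. Hence $\kappa>0$ for the slice level sets at almost every level.

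Two further points need checking. First, exceptional $y$ and $t$ form null sets, and the integrand is nonnegative, which justifies Fubini and Tonelli. Second, at points where the restricted gradient vanishes, the integrand is interpreted as zero, or the set of such points is null by hypothesis. The case $k=1$ also has to be handled: there the formula reads $\int|g_y'|=2\|g_y\|_\infty$, with the determinant of an empty matrix equal to $1$ and $S(\mathbb B^1)=2$. This is the functional Cauchy formula \eqref{eq:FunctCauchy} and follows from unimodality of $g_y$.
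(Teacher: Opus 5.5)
Your proposal is correct and follows the paper's proof essentially step for step. You slice by $y\in H_\sigma$, apply the coarea formula on $H_\sigma^\bot$ to $g_y$ with $\varphi$ the Gauss--Kronecker curvature of the slice level sets, evaluate each level by Lemma~\ref{lem: integral curvature} to get $S(\mathbb B^{n-|\sigma|})P_{H_\sigma}f(y)$, and integrate over $H_\sigma$. Your second-fundamental-form argument for positive curvature of the sections fills in what the paper merely asserts when it writes $g_y\in\mathcal{Q}K(H_\sigma^\bot)$.

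One small correction: Sard's theorem is not available in general for a $\mathcal C^2$ function on $\mathbb R^k$ with $k\ge 3$, but it is also unnecessary. The condition $\nabla f\neq0$ on $\{f=t\}$ is implicit in $f\in\mathcal{Q}K$. For $0<t<\max g_y$, the slice $y+H_\sigma^\bot$ meets $\{f>t\}\subseteq\mathrm{int}(L_{f,t})$, so it cannot lie in the supporting hyperplane $p+\nabla f(p)^\bot$ at any $p\in y+H_\sigma^\bot$ with $f(p)=t$. This forces $\nabla_{H_\sigma^\bot}f(p)\neq0$ at every point of every such slice level set, with no exceptional levels.
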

\begin{proof}
    Let $y\in H_\sigma$ be fixed and $g_y: H_\sigma^\bot\to[0,\infty)$ be the function given by $g_y(x)=f(x+y)$ for every $x\in H_\sigma^\bot$. Notice also that $g_y\in \mathcal{Q}K(H_\sigma^\bot) $ is an integrable function of class $\mathcal{C}^2$ in $H_\sigma^\bot$ since $f\in\mathcal{C}^2(\mathbb{R}^n)\cap \mathcal{Q}K(\mathbb R^n)$. Let us consider the level sets 
    \[
    L_{g_y,t}:=\{x\in H_\sigma^\bot:\, g_y(x)\geq t\}
    \]
    which are convex and 
    \[
\Sigma_{g_y,t}:=\mathrm{relbd}\left(L_{g_y,t}\right)=\{x\in H_\sigma^\bot:\, g_y(x)= t\}
    \]
    which is a compact orientable hypersurface of class $\mathcal C^2$ in $H_\sigma^\bot$ with $\kappa_{\Sigma_{g_y,t}}>0$, for almost every $t\in(0,\sup\{g_y(x):x\in H_\sigma^\bot\})$. Therefore, by Lemma \ref{lem: integral curvature} applied to $\Sigma_{g_y,t}$ it follows that
    \begin{equation}\label{eq: superficie de la bola}
        \int_{\Sigma_{g_y,t}}\kappa_{\Sigma_{g_y,t}}(x)d\mathcal{H}^{n-|\sigma|-1}(x)=S(\mathbb B^{n-|\sigma|}).
    \end{equation}
    Now, we apply the coarea formula  \eqref{eq: coarea formula} to 
$\varphi(x)=\kappa_{\Sigma_{g_y,g_y(x)}}(x)$ which is a strictly positive function, and $F=g_y$, which is locally Lipschitz since $g_y\in\mathcal C^2$ and $\sup_{x\in H_\sigma^\bot}\{g_y(x)\}<\infty$. Then, by \eqref{eq: coarea formula} and \eqref{eq: superficie de la bola} together with $P_{H_\sigma}f(y)=\sup\{g_y(x):x\in H_\sigma^\bot\}$ we get that
    \begin{equation}\label{eq: proyeccion de f como función}
    \begin{split}
        \int_{H_\sigma^\bot}\kappa_{\Sigma_{g_y,g_y(x)}}(x) & \Vert \nabla g_y(x)\Vert_2dx \\
        &=\int_0^{\sup\{g_y(x):\,x\in H_\sigma^\bot\}}\int_{\Sigma_{g_y,t}}\kappa_{\Sigma_{g_y,t}}(x)d\mathcal{H}^{n-|\sigma|-1}(x)dt\\
        &=S(\mathbb B^{n-|\sigma|})\int_0^{\sup\{g_y(x):\,x\in H_\sigma^\bot\}}dt\\
        &=S(\mathbb B^{n-|\sigma|})P_{H_\sigma}f(y).
\end{split}
    \end{equation}
    
    Finally, integrating over $H_\sigma$ both sides of \eqref{eq: proyeccion de f como función} and using \eqref{eq: Gauss curvature and -dNx}  we conclude the proof,
    \begin{eqnarray*}
        \int_{H_\sigma}P_{H_\sigma}f(y)dy&=&\frac{1}{S(\mathbb B^{n-|\sigma|})}\int_{H_\sigma}\int_{H_\sigma^\bot}\kappa_{\Sigma_{g_y,g_y(x)}}(x)\Vert \nabla g_y(x)\Vert_2dxdy\\
        &=&\frac{1}{S(\mathbb B^{n-|\sigma|})}\int_{H_\sigma}\int_{H_\sigma^\bot}\frac{\mathrm{det}\left(-\left.D^2g_y(x)\right|_{\nabla g_y(x)^\bot\cap H_\sigma^\bot}\right)}{\Vert \nabla g_y(x)\Vert_2^{n-|\sigma|-2}}dxdy.
    \end{eqnarray*}
    \end{proof}

\section{Sobolev type inequalities}\label{sec:SobolevInequs}

We start this section by recalling some well-known facts about the Minkowski functional (see \cite{AGM15, Sc14}).

\begin{lemma}\label{lem: propiedades del funcional de Minkowski}
    Let $0\in K\in\mathcal{K}^n$ and $t>0$. Then,
    \begin{itemize}
        \item[i)] $\{x\in\mathbb{R}^n:\,\Vert x\Vert_K\leq 1\}=K$,
        \item[ii)] $\Vert x\Vert_{tK}=\frac{1}{t}\Vert x\Vert_{K}.$
        \end{itemize}
        Moreover, if $0\in K_1\in\mathcal K^i$ and  $0\in K_2\in\mathcal K^{n-i}$, then
        \begin{itemize}
        \item [iii)] for every $(x,y)\in\mathbb R^i\times\mathbb R^{n-i}$ it holds that
        \[\Vert (x,y)\Vert_{\mathrm{conv}(\{K_1,K_2\})}=\Vert x\Vert_{K_1}+\Vert y\Vert_{K_2}.\]
    \end{itemize}
\end{lemma}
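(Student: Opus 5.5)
Items i) and ii) follow directly from the definition $\|x\|_K=\inf\{\rho\geq 0:x\in\rho K\}$, using that $K$ is convex, compact and contains the origin. For i), if $x\in K$ then $\rho=1$ is admissible, so $\|x\|_K\leq 1$. Conversely, suppose $\|x\|_K\leq 1$. Then there are $\rho_k\downarrow\|x\|_K$ with $x\in\rho_k K$. Since $0\in K$ and $K$ is convex, $\rho K\subseteq \rho' K$ whenever $\rho\leq\rho'$, so $x\in\rho K$ for every $\rho>\|x\|_K$. Closedness of $K$ then gives $x\in\|x\|_K K\subseteq K$. The degenerate case $\|x\|_K=0$ needs care, since $0\cdot K=\{0\}$. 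Here I would use compactness: if $x\in\rho_k K$ with $\rho_k\to 0$, then $x/\rho_k\in K$ stays bounded, which forces $x=0\in K$. For ii), I would substitute directly: $x\in\rho(tK)$ if and only if $x\in(\rho t)K$. Writing $\rho'=\rho t$ gives $\|x\|_{tK}=\inf\{\rho'/t: x\in\rho' K\}=\frac1t\|x\|_K$.

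For iii), I identify $K_1$ with $K_1\times\{0\}$ and $K_2$ with $\{0\}\times K_2$. Then
\[
\mathrm{conv}(\{K_1,K_2\})=\{((1-\lambda)a,\lambda b):a\in K_1,\,b\in K_2,\,\lambda\in[0,1]\},
\]
because the right-hand side is convex. To check this, I would take two such points and a convex combination of them. The first coordinates combine into a point of the form $(1-\lambda'')a''$, where $a''$ is a convex combination of the $a$'s, and similarly in the second coordinate. Consistency of the weights is verified by the standard computation for joins of sets containing the origin. The degenerate cases $\lambda=0$ or $\lambda=1$ are absorbed because $0\in K_i$.

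Next I prove two inequalities. For ``$\leq$'', let $\alpha>\|x\|_{K_1}$ and $\beta>\|y\|_{K_2}$. Then $x\in\alpha K_1$ and $y\in\beta K_2$, so
\[
(x,y)=(\alpha+\beta)\Bigl(\tfrac{\alpha}{\alpha+\beta}\tfrac{x}{\alpha},\tfrac{\beta}{\alpha+\beta}\tfrac{y}{\beta}\Bigr)\in(\alpha+\beta)\,\mathrm{conv}(\{K_1,K_2\}),
\]
which gives $\|(x,y)\|\leq\alpha+\beta$. Letting $\alpha\to\|x\|_{K_1}$ and $\beta\to\|y\|_{K_2}$ yields $\|(x,y)\|\leq\|x\|_{K_1}+\|y\|_{K_2}$. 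For ``$\geq$'', suppose $(x,y)\in\rho\,\mathrm{conv}(\{K_1,K_2\})$. By the description above, $x=\rho(1-\lambda)a$ and $y=\rho\lambda b$. Hence $\|x\|_{K_1}\leq\rho(1-\lambda)$ and $\|y\|_{K_2}\leq\rho\lambda$, so $\|x\|_{K_1}+\|y\|_{K_2}\leq\rho$. Taking the infimum over $\rho$ gives the reverse inequality.

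The only mildly delicate points are the boundary cases. These are $\|x\|_K=0$ in i), and $\alpha$ or $\beta$ equal to zero in iii); the latter are handled by working with strict inequalities $\alpha>\|x\|_{K_1}$, $\beta>\|y\|_{K_2}$ and then passing to the limit. Apart from these, the main step is the explicit description of the convex hull as the join $\{((1-\lambda)a,\lambda b)\}$, which is standard.
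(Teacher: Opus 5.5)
Your proof is correct. The paper does not prove this lemma at all. It lists i)--iii) as well-known facts about the Minkowski functional and refers to \cite{AGM15, Sc14}, so your argument is a self-contained verification from the definition rather than a variant of one in the text.

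For iii), you identify $\mathrm{conv}(\{K_1,K_2\})$ with the join $\{((1-\lambda)a,\lambda b):a\in K_1,\,b\in K_2,\,\lambda\in[0,1]\}$ and then prove the two inequalities separately. This is the standard route, and the resulting splitting of the functional is exactly what is fed into Fubini in the proof of Theorem~\ref{thm:LocalBTprojection}.

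A few points could be stated more cleanly:
\begin{enumerate}
\item In i), the case $\|x\|_K=0$ needs no compactness. Your monotonicity $\rho K\subseteq\rho' K$ for $\rho\leq\rho'$ already gives $x\in K$ whenever $\|x\|_K<1$. Closedness is needed only when $\|x\|_K=1$.
\item The join description of the convex hull holds for any two nonempty convex sets, so $0\in K_j$ plays no role there, contrary to your remark about the degenerate weights.
\item In iii), the hypothesis $0\in K_j$ is used only through that same monotonicity, to pass from $\alpha>\|x\|_{K_1}$ to $x\in\alpha K_1$ in the ``$\leq$'' step.
\item Since $0$ may lie on $\mathrm{bd}(K)$, the functional can take the value $+\infty$ (with $\inf\emptyset=+\infty$). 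Both inequalities in iii) are then trivial, and it is worth saying so explicitly.
\end{enumerate}
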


The following identity is well known and belongs to the folklore of the subject (see, e.g., \cite{AGM15} and \cite{Sc14}); we include its short proof for the sake of completeness.
\begin{lemma}\label{lem: calculo del volumen con el funcional de minkowski}
    Let $0\in K\in\mathcal{K}^n$. Then,
    \[n!\mathrm{vol}_n(K)=\int_{\mathbb{R}^n}e^{-\Vert x\Vert_K}dx.\]
\end{lemma}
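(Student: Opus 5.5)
The plan is to use the layer-cake representation and then a change of variables. Since $e^{-\Vert x\Vert_K}>0$, Fubini (Tonelli) applies to the nonnegative integrand and gives
\[
e^{-\Vert x\Vert_K}=\int_{\Vert x\Vert_K}^{\infty}e^{-t}\,dt=\int_0^\infty e^{-t}\,\chi_{\{\Vert x\Vert_K\le t\}}(x)\,dt .
\]
Hence
\[
\int_{\mathbb R^n}e^{-\Vert x\Vert_K}dx=\int_0^\infty e^{-t}\,\mathrm{vol}_n\left(\{x:\Vert x\Vert_K\le t\}\right)dt .
\]

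Next I identify the level sets. For $t>0$, Lemma \ref{lem: propiedades del funcional de Minkowski} ii) gives $\Vert x\Vert_K\le t$ if and only if $\Vert x\Vert_{tK}\le 1$. By i), this says $x\in tK$. The homogeneity of Lebesgue measure then gives $\mathrm{vol}_n(tK)=t^n\mathrm{vol}_n(K)$. For $t=0$ the set is a null set, so it does not affect the integral.

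Substituting, the integral becomes
\[
\mathrm{vol}_n(K)\int_0^\infty t^n e^{-t}\,dt=\Gamma(n+1)\,\mathrm{vol}_n(K)=n!\,\mathrm{vol}_n(K).
\]

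The only delicate point is the case where $0$ lies on the boundary of $K$ rather than in its interior. Then $\Vert\cdot\Vert_K$ may take the value $+\infty$ outside the cone generated by $K$, and there $e^{-\infty}=0$. The argument is unaffected: each level set $\{\Vert x\Vert_K\le t\}$ still equals $tK$, because $K$ is closed and convex with $0\in K$. So the identity holds for every convex body containing the origin.
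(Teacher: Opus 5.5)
Your proof is correct and is essentially the paper's argument. Both write $e^{-\Vert x\Vert_K}=\int_{\Vert x\Vert_K}^\infty e^{-t}\,dt$, swap the integrals by Fubini--Tonelli, use parts i) and ii) of Lemma \ref{lem: propiedades del funcional de Minkowski} to get $\{x:\Vert x\Vert_K\le t\}=tK$, and finish with $\int_0^\infty t^n e^{-t}\,dt=n!$; your remark on the case $0\in\mathrm{bd}(K)$, where $\Vert\cdot\Vert_K$ may be $+\infty$, is a valid point that the paper leaves implicit.
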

\begin{proof}
By Fubini's Theorem we have that
\[
 \int_{\mathbb{R}^n}e^{-\Vert x\Vert_K}dx=\int_{\mathbb{R}^n}\int_{\Vert x\Vert_K}^\infty e^{-t}dtdx=\int_0^\infty e^{-t}\int_{\{x\in\mathbb{R}^n:\Vert x\Vert_K\leq t\}}dxdt.
\]
Besides, by Lemma \ref{lem: propiedades del funcional de Minkowski}
\[
\{x\in\mathbb{R}^n:\Vert x\Vert_K\leq t\}=\{x\in\mathbb{R}^n:\Vert x\Vert_{tK}\leq 1\}=tK.
\]
Therefore,
\[
\int_0^\infty e^{-t}\int_{\{x\in\mathbb{R}^n:\Vert x\Vert_K\leq t\}}dxdt=\mathrm{vol}_n(K)\int_0^\infty e^{-t}t^ndt=n!\mathrm{vol}_n(K),
\]
proving the result.
\end{proof}

From now on we see how to exploit Theorem \ref{thm:AlgebraicTool} in order to show some Sobolev/ isoperimetric type results. We now prove Theorem \ref{thm:SobolevBTineq}.
\begin{thm}\label{thm: iso proyec}
     Let $f\in\mathcal{F}(\mathbb{R}^n)$. Then, there exists a coordinate box $B_f$ such that 
    \[
    \int_{\mathbb{R}^n}\frac{f(x)}{\Vert f\Vert_\infty}dx=\int_{\mathbb{R}^n}e^{-\Vert x\Vert_{B_f}}dx
    \]
    and
    \[
    \int_{H_\tau}\frac{P_{H_\tau}f(x)}{\Vert f\Vert_\infty}dx\geq\int_{H_\tau}P_{H_\tau}e^{-\Vert x\Vert_{B_f}}dx
    \]
    for any $\tau\subseteq[n].$
\end{thm}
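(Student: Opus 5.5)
We plan to apply Theorem \ref{thm:AlgebraicTool} with $\sigma=[n]$, after encoding the projection integrals of $f$ as a tuple in $\mathcal C_{[n]}$. Normalize first: replacing $f$ by $f/\|f\|_\infty$ we may assume $\|f\|_\infty=1$. For every $\rho\subseteq[n]$ put
\[
C_\rho:=\frac{1}{|\rho|!\,2^{|\rho|}}\int_{H_\rho}P_{H_\rho}f(x)\,dx ,
\]
with the convention $C_\emptyset=1$. The normalization is chosen so that for a box $B=\prod_{i}[-a_i,a_i]$, Lemma \ref{lem: calculo del volumen con el funcional de minkowski} and $P_{H_\rho}e^{-\|\cdot\|_B}=e^{-\|\cdot\|_{P_{H_\rho}B}}$ give
\[
\frac{1}{|\rho|!\,2^{|\rho|}}\int_{H_\rho}P_{H_\rho}e^{-\|\cdot\|_B}=\prod_{i\in\rho}a_i .
\]
The tuple is indexed through complements: set $x_\rho:=C_\rho$ and $C'_\rho:=C_\rho$. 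We then verify that $(x_\rho)_{\rho\subseteq[n]}\in\mathcal C_{[n]}$.

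The first two lines of \eqref{eq:Algebraic} hold trivially. For Type-$(1)$, take an $s$-cover $(\sigma_1,\dots,\sigma_m)$ of $[n]$; the inequality required is $x_{[n]}^{m-s}\le\prod_j x_{[n]\setminus\sigma_j}$. The sets $[n]\setminus\sigma_j$ form an $(m-s)$-cover of $[n]$. Applying \eqref{eq: functional ineq proyecciones BT} to this cover with $\|f\|_\infty=1$ gives
\[
\Big(\int f\Big)^{m-s}\le\frac{(n!)^{m-s}}{\prod_j|[n]\setminus\sigma_j|!}\prod_j\int P_{H_{[n]\setminus\sigma_j}}f .
\]
The powers of $2$ match because $\sum_j|[n]\setminus\sigma_j|=(m-s)n$, so this is exactly the Type-$(1)$ condition in our normalization.

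For Type-$(2)$ we need $C_\rho\le\prod_{i\in\rho}C_{\{i\}}$. This is the case of \eqref{eq: functional ineq proyecciones BT} applied to $P_{H_\rho}f$ on $H_\rho$ with the $1$-cover $(\{i\}:i\in\rho)$: there $m=s$, so the $\|\cdot\|_\infty$ factor disappears, and $P_{H_{\{i\}}}P_{H_\rho}f=P_{H_{\{i\}}}f$. The constants then read
\[
\int_{H_\rho} P_{H_\rho}f\le|\rho|!\prod_{i\in\rho}\int P_{H_{\{i\}}}f ,
\]
which matches $|\rho|!\,2^{|\rho|}\prod_{i}\big(\tfrac12\int P_{H_{\{i\}}}f\big)$. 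Hence $\mathcal C_{[n]}\neq\emptyset$.

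Theorem \ref{thm:AlgebraicTool} now provides a minimal $(t_\rho)\in\mathcal C_{[n]}$ with $t_\rho=\prod_{i\in\rho}t_{\{i\}}$, $t_{[n]}=C_{[n]}$ and $t_\rho\le C_\rho$. By part (i) each $t_{\{i\}}>0$. Set $a_i:=t_{\{i\}}$ and $B_f:=\prod_i[-a_i,a_i]$. The identity above then gives
\[
\int e^{-\|\cdot\|_{B_f}}=n!\,2^n\prod_i a_i=n!\,2^n C_{[n]}=\int f .
\]
For each $\tau$ it also gives
\[
\int_{H_\tau}P_{H_\tau}e^{-\|\cdot\|_{B_f}}=|\tau|!\,2^{|\tau|}t_\tau\le|\tau|!\,2^{|\tau|}C_\tau=\int_{H_\tau}P_{H_\tau}f ,
\]
which is the claim, and $\tau=\emptyset$ is trivial.

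The main obstacle is bookkeeping. We must make sure the indexing convention of \eqref{eq:Algebraic}, which uses $x_{\sigma\setminus\sigma_j}$, is matched with the complement cover. We must also check that the combinatorial constants $n!$, $|\sigma_j|!$ and the factors $2^{|\rho|}$ cancel exactly; this is why the normalization by $|\rho|!\,2^{|\rho|}$ is used. Finally, we need to check that \eqref{eq: functional ineq proyecciones BT} applies to the projected function $P_{H_\rho}f\in\mathcal F(H_\rho)$, with $\|P_{H_\rho}f\|_\infty=\|f\|_\infty$.
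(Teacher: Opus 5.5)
Your proof is correct and follows the paper's route: normalize by $\|f\|_\infty$, feed the normalized projection integrals into $\mathcal C_{[n]}$ via the Alonso-Guti\'errez--Bernu\'es--Brazitikos--Carbery inequality, invoke Theorem~\ref{thm:AlgebraicTool}, and build the box from the $t_{\{i\}}$. Your extra factor $2^{|\rho|}$ with the box $\prod_i[-t_{\{i\}},t_{\{i\}}]$ is equivalent to the paper's $C_\rho=\frac{1}{|\rho|!}\int_{H_\rho}P_{H_\rho}f$ with the box $\prod_i[-t_{\{i\}}/2,t_{\{i\}}/2]$, and your explicit use of the complement $(m-s)$-cover for Type-$(1)$ spells out a step the paper leaves implicit; one small slip is that the $1$-cover $(\{i\}:i\in\rho)$ has $m=|\rho|$ and $s=1$, not $m=s$, so the factor $\|P_{H_\rho}f\|_\infty^{|\rho|-1}$ does not vanish on its own but equals $1$ by your normalization.
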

\begin{proof}
    Let  $f\in\mathcal{F}(\mathbb{R}^n)$ and $(\sigma_1,\ldots,\sigma_m)$ a $s$-cover of $[n]$. Then, by \eqref{eq: functional ineq proyecciones BT} it follows that 
    \begin{equation*}
    \Vert f\Vert_\infty^{m-s}\left(\int_{\mathbb{R}^n}f(x)dx\right)^s\leq\frac{(n!)^s}{\prod_{j=1}^m|\sigma_j|!}\prod_{j=1}^m\int_{H_{\sigma_j}}P_{H_{\sigma_j}}f(x)dx.
\end{equation*}
Dividing both sides by $\Vert f\Vert_\infty^{m}$, which is not equal to zero since $f$ is not zero, we can rewrite it as
 \begin{equation}\label{eq: eq 1 proyec funtional reescrita}
    \left(\frac{1}{n!}\int_{\mathbb{R}^n}\frac{f}{\Vert f\Vert_\infty}(x)dx\right)^s\leq\prod_{j=1}^m\frac{1}{|\sigma_j|!}\int_{H_{\sigma_j}}P_{H_{\sigma_j}}\frac{f}{\Vert f\Vert_\infty}(x)dx.
\end{equation}
We can also apply \eqref{eq: eq 1 proyec funtional reescrita} to the function $P_{H_\tau}f$ for any $\tau\subseteq[n]$ and we get that
\begin{equation*}
    \left(\frac{1}{|\tau|!}\int_{H_\tau}\frac{P_{H_\tau}f}{\Vert P_{H_\tau}f\Vert_\infty}(x)dx\right)^s\leq\prod_{j=1}^m\frac{1}{|\tau_j|!}\int_{H_{\tau_j}}P_{H_{\tau_j}}\frac{P_{H_\tau}f}{\Vert P_{H_\tau}f\Vert_\infty}(x)dx,
\end{equation*}
with $(\tau_1,\ldots,\tau_m)$ a $s$-cover of $\tau$. It is easy to check that $\Vert f\Vert_\infty=\Vert P_{H_\tau}f\Vert_\infty$ and $P_{H_{\tau_j}}(P_{H_\tau}f)=P_{H_{\tau_j}}f$, for every $j=1,\ldots,m$. Therefore,
\begin{equation*}
    \left(\frac{1}{|\tau|!}\int_{H_\tau}P_{H_\tau}\left(\frac {f}{\Vert f\Vert_\infty}\right)(x)dx\right)^s\leq\prod_{j=1}^m\frac{1}{|\tau_j|!}\int_{H_{\tau_j}}P_{H_{\tau_j}}\left(\frac{f}{\Vert f\Vert_\infty}\right)(x)dx.
\end{equation*}
In particular, we can take the $1$-cover of $\tau$ given by $(\{j\}:j\in\tau)$ so that
\begin{equation}\label{eq: eq 1 proyec funtional tau }
    \frac{1}{|\tau|!}\int_{H_\tau}P_{H_\tau}\left(\frac{f}{\Vert f\Vert_\infty}\right)(x)dx\leq\prod_{j\in\tau}\int_{H_{\{j\}}}P_{H_{\{j\}}}\left(\frac{f}{\Vert f\Vert_\infty}\right)(x)dx,
\end{equation}

Now, we consider the set of positive real numbers ($f$ is a non-negative integrable function) given by \begin{equation}\label{set: BT que hace no vacio}
        \left\{C_\tau:=\frac{1}{|\tau|!}\int_{H_{\tau}}P_{H_{\tau}}\left(\frac{f}{\Vert f\Vert_\infty}\right)(x)dx: \,\tau\subseteq[n]\right\}.
    \end{equation}
     Then,  let $\mathcal{C}$ be the set  defined as in Theorem \ref{thm:AlgebraicTool} for the constants $C_\tau$ as given in \eqref{set: BT que hace no vacio}, for any $\tau\subseteq [n]$. By inequalities \eqref{eq: eq 1 proyec funtional reescrita} and \eqref{eq: eq 1 proyec funtional tau } the set \eqref{set: BT que hace no vacio} belongs to $\mathcal{C}\neq\emptyset$. Therefore, by Theorem \ref{thm:AlgebraicTool} there exists a set of positive real numbers $\{t_\tau\in(0,\infty):\tau\subseteq[n]\}\in\mathcal{C}$ such that
     \begin{equation}\label{eq:t_tau_prod}
    t_\tau=\prod_{j\in\tau}t_{\{j\}}
    \end{equation}
    for any $\tau\subseteq[n].$

    Let us consider now the coordinate box given by 
    \[B=\sum_{i=1}^n\frac{t_{\{i\}}}{2}\left[-e_i,e_i\right],\]
    whose volume is equal to
    \[\mathrm{vol}(B)=\prod_{i=1}^nt_{\{i\}}.\]
    By Lemma \ref{lem: calculo del volumen con el funcional de minkowski}, \eqref{eq:t_tau_prod} and the fact that $\{t_\tau\in(0,\infty):\tau\subseteq[n]\}\in\mathcal{C}$ it follows that
    \begin{eqnarray*}
        \int_{\mathbb{R}^n}e^{-\Vert x\Vert_B}dx&=&n!\mathrm{vol}(B)=n!\prod_{i=1}^nt_{\{i\}}=n!t_{[n]}\\
        &=&        \int_{\mathbb{R}^n}\frac{f}{\Vert f\Vert_\infty}(x)dx.
    \end{eqnarray*}
 Using the same arguments as above we can analogously prove that
\begin{eqnarray*}
     \int_{H_\tau}P_{H_\tau}e^{-\Vert x\Vert_{B}}dx&=&\int_{H_\tau}e^{-\Vert x\Vert_{P_{H_\tau}B}}dx=|\tau|!\mathrm{vol}(P_{H_\tau}B)\\
     &=&|\tau|!\prod_{i\in\tau}t_{\{i\}}=|\tau|!t_{\tau}\\
        &\leq&        \int_{H_\tau}P_{H_\tau}\left(\frac{f}{\Vert f\Vert_\infty}\right)(x)dx,
\end{eqnarray*}
for every $\tau\subseteq [n]$, concluding the proof.
\end{proof}

\begin{cor}
    Theorem \ref{thm:SobolevBTineq} follows from Lemma \ref{lemma:intprojection} and Theorem \ref{thm: iso proyec}.
\end{cor}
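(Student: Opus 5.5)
The corollary asks us to show that Theorem \ref{thm:SobolevBTineq} follows by combining the two earlier results. The plan is to take the box $B_f$ produced by Theorem \ref{thm: iso proyec} and use Lemma \ref{lemma:intprojection} to rewrite both sides of its projection inequality as integrals of the operator $M$.

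\textbf{Step 1: choice of box and the volume identity.} Apply Theorem \ref{thm: iso proyec} to $f$. This is allowed because $f\in\mathcal F(\mathbb R^n)$. It yields a box $B_f$ with
\[
\int f/\|f\|_\infty=\int e^{-\|x\|_{B_f}}dx
\]
and, for every $\tau\subseteq[n]$,
\[
\frac{1}{\|f\|_\infty}\int_{H_\tau}P_{H_\tau}f\ \geq\ \int_{H_\tau}P_{H_\tau}e^{-\|x\|_{B_f}}dx=|\tau|!\,\mathrm{vol}_{|\tau|}(P_{H_\tau}B_f).
\]
The first relation is already the first assertion of Theorem \ref{thm:SobolevBTineq}.

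\textbf{Step 2: unpacking $M(f,[n],\tau)$.} Set $\sigma=[n]$. Then $([n]\setminus\sigma)\cup\tau=\tau$, so $M(f,[n],\tau)$ is exactly the integrand of Lemma \ref{lemma:intprojection} with the subspace $H_\tau$ in place of $H_\sigma$. The exponent also matches: $|\sigma|-|\tau|-2=n-|\tau|-2$.

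\textbf{Step 3: hypotheses of the lemma.} Lemma \ref{lemma:intprojection} requires $\nabla_{H_\tau^\bot}f(x)\neq0$ for almost every $x\in\mathrm{supp}(f)$. This follows from the hypothesis $\nabla_{H_{\{j\}}}f\neq0$ for every $j$, since $H_\tau^\bot$ contains some $e_j$ whenever $\tau\neq[n]$. The degenerate case $\tau=[n]$ is read by convention. The remaining assumptions, $\mathcal C^2$, $\mathcal QK$ and $\|f\|_\infty<\infty$, are given directly.

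\textbf{Step 4: the function side.} The lemma then gives
\[
\int_{\mathbb R^n}M(f,[n],\tau)=S(\mathbb B^{n-|\tau|})\int_{H_\tau}P_{H_\tau}f.
\]
Dividing by $\|f\|_\infty$ and using Step 1 yields
\[
\frac{1}{\|f\|_\infty}\int_{\mathbb R^n}M(f,[n],\tau)\ \geq\ S(\mathbb B^{n-|\tau|})\,|\tau|!\,\mathrm{vol}_{|\tau|}(P_{H_\tau}B_f).
\]

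\textbf{Step 5: the box side (main obstacle).} It remains to identify the right-hand side with $\int M(e^{-\|\cdot\|_{B_f}},[n],\tau)$, understood as a limit. Here $e^{-\|\cdot\|_{B_f}}$ is neither $\mathcal C^2$ nor in $\mathcal QK$, so the lemma cannot be applied to it directly. The plan is as follows.
\begin{itemize}
\item Build smooth approximants $g_k\in\mathcal F\cap\mathcal C^2\cap\mathcal QK$. For instance, take $g_k=e^{-\phi_k}$ with $\phi_k$ a smooth, strictly convex gauge-like function. One choice is the $\ell_p$-type norm $\bigl(\sum_i |x_i/a_i|^{p}\bigr)^{1/p}$ with $p=p_k\to\infty$, smoothed near the origin and perturbed by $\varepsilon_k\|x\|_2^2$. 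These level sets are strictly convex with positive curvature, and the approximants converge to $e^{-\|\cdot\|_{B_f}}$.
\item Apply the lemma to each $g_k$:
\[
\int M(g_k,[n],\tau)=S(\mathbb B^{n-|\tau|})\int_{H_\tau}P_{H_\tau}g_k .
\]
\item Show convergence of the projection integrals. Since $P_{H_\tau}$ commutes with monotone or locally uniform limits of these radial-type functions, dominated convergence gives
\[
\int_{H_\tau}P_{H_\tau}g_k\to\int_{H_\tau}P_{H_\tau}e^{-\|\cdot\|_{B_f}}.
\]
\end{itemize}
By definition, the limit value of the $M$-integrals is then $|\tau|!\,S(\mathbb B^{n-|\tau|})\,\mathrm{vol}_{|\tau|}(P_{H_\tau}B_f)$. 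This matches the value stated after Theorem \ref{thm:SobolevBTineq} and closes the argument.

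Verifying that the approximants actually satisfy the $\mathcal QK$ curvature condition and the almost-everywhere nonvanishing-gradient condition is the delicate technical point. The rest is bookkeeping.
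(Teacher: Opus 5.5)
Your proposal is correct and is the argument the paper intends (the corollary is stated there without a written proof). You apply Theorem~\ref{thm: iso proyec} to obtain $B_f$, then use Lemma~\ref{lemma:intprojection} with $H_\tau$ in place of $H_\sigma$ on $f$ and on smooth approximants of $e^{-\|\cdot\|_{B_f}}$. The one difference is that the paper builds the existence of approximants $g_k$ with $\int_{H_\tau}P_{H_\tau}g_k\to\int_{H_\tau}P_{H_\tau}e^{-\|\cdot\|_{B_f}}$ into its definition of the right-hand side, so your Step~5 construction is additional justification rather than something the paper proves.
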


We now show Theorem \ref{thm:SobolevLiakopoulos}. Notice that we use below the analogous to Theorem \ref{thm:AlgebraicTool} but when replacing all instances of $\geq$ by $\leq$, minimal by maximal, and so on.
\begin{proof}[Proof of Theorem \ref{thm:SobolevLiakopoulos}]
     Let $f\in\mathcal{F}(\mathbb{R}^n)$ such that $0\in\mathrm{int}(\mathrm{supp }(f))$ and $(\sigma_1,\ldots,\sigma_m)$ a $s$-cover of $[n].$ Then by \eqref{eq: ABBC funcional secciones} we have that
    \begin{equation}\label{eq: David teorema funcional sec 2}
        \left(n^n\int_{\mathbb{R}^n}f^n(x)dx\right)^s\geq\prod_{j=1}^m|\sigma_j|^{|\sigma_j|}\int_{H_{\sigma_j}}f^{|\sigma_j|}(x)dx.
    \end{equation}
If we apply \eqref{eq: David teorema funcional sec 2} to the restriction of $f$ to the subspace $H_\tau$, which is also an integrable log-concave function on $H_\tau$ for any $\tau\subseteq[n]$, we get that 
\begin{equation*}
    \left(|\tau|^{|\tau|}\int_{H_\tau}f^{|\tau|}(x)dx\right)^s\geq\prod_{j=1}^m|\tau_j|^{|\tau_j|}\int_{H_{\tau_j}}f^{|\tau_j|}(x)dx,
\end{equation*}
where $(\tau_1,\ldots,\tau_m)$ is a $s$-cover of $\tau.$ In particular, we can consider the $1$-cover of $\tau$ given by $(\{j\}:j\in \tau)$ and it follows that
\begin{equation}\label{eq: david teorema funcional sec 2 tau}
    |\tau|^{|\tau|}\int_{H_\tau}f^{|\tau|}(x)dx\geq\prod_{j\in\tau}\int_{H_{\{j\}}}f(x)dx.
\end{equation}

Now, we consider the set of positive real numbers ($f$ is a non-negative integrable function) given by \begin{equation}\label{set: Lia que hace no vacio}
        \left\{C_\tau:=|\tau|^{|\tau|}\int_{H_{\tau}}f^{|\tau|}(x)dx: \,\tau\subseteq[n]\right\}.
    \end{equation}
    Then,  let $\mathcal{C}$ be the set  defined as in Theorem \ref{thm:AlgebraicTool} for the constants $C_\tau$ as given in \eqref{set: Lia que hace no vacio}, for any $\tau\subseteq [n]$. By inequalities \eqref{eq: David teorema funcional sec 2} and \eqref{eq: david teorema funcional sec 2 tau} the set \eqref{set: Lia que hace no vacio} belongs to $\mathcal{C}\neq\emptyset$. Therefore, by Theorem \ref{thm:AlgebraicTool} there exists a set of positive real numbers $\{t_\tau\in(0,\infty):\tau\subseteq[n]\}\in\mathcal{C}$ such that
    \[
    t_\tau=\prod_{j\in\tau}t_{\{j\}}
    \]
    for any $\tau\subseteq[n].$

    We consider the crosspolytope 
    \[C=\mathrm{conv}\left(\left\{\pm\frac{t_{\{i\}}}{2}e_i:\,i=1,\ldots n\right\}\right),\] which has volume equal to 
    \[\mathrm{vol}(C)=\frac{1}{n!}\prod_{i=1}^nt_{\{i\}},\]
    and the function $g=e^{-\Vert\cdot\Vert_C}$. Using ii) of Lemma \ref{lem: propiedades del funcional de Minkowski}, Lemma \ref{lem: calculo del volumen con el funcional de minkowski} and the fact that $\{t_\tau\in(0,\infty):\tau\subseteq[n]\}\in\mathcal{C}$, we get that 
    \begin{eqnarray*}
        \int_{\mathbb{R}^n}g^n(x)dx&=&\frac{1}{n^n}\int_{\mathbb{R}^n}e^{-\Vert x\Vert_C}dx=\frac{n!}{n^n}\mathrm{vol}(C)\\
        &=&\frac{1}{n^n}\prod_{i=1}^nt_{\{i\}}=\frac{1}{n^n}t_{[n]}\\
        &=&        \int_{\mathbb{R}^n}f^n(x)dx.
    \end{eqnarray*}
Notice that by the same arguments as above we get for every $\tau\subseteq [n]$ that
\begin{eqnarray*}
     \int_{H_\tau}g^{|\tau|}(x)dx&=&\frac{1}{|\tau|^{|\tau|}}\int_{H_\tau}e^{-\Vert x\Vert_C}dx=\frac{|\tau|!}{|\tau|^{|\tau|}}\mathrm{vol}(C\cap H_\tau)\\
     &=&\frac{1}{|\tau|^{|\tau|}}\prod_{i\in\tau}t_{\{i\}}=\frac{1}{|\tau|^{|\tau|}}t_{\tau}\\
        &\geq&        \int_{H_\tau}f^{|\tau|}(x)dx,
\end{eqnarray*}
concluding the proof.
\end{proof}

We now turn to the proof of Theorem \ref{thm:localSobolevBT}.

\begin{thm}\label{thm:LocalBTprojection}
    Let $f\in\mathcal{F}(\mathbb{R}^n)$ and $\sigma\subset[n]$. Then, there exists a certain convex body
    \[K_f=\mathrm{conv}\left(\left\{\sum_{i\in\sigma}c_i[-e_i,e_i],L
    \right\}\right),\]
     where $c_i>0$, $i\in\sigma$, $L$ is a $(n-|\sigma|)$-dimensional convex body contained in $H_\sigma^\bot$, with $0\in L$,
    such that   
    \[
    \int_{\mathbb{R}^n}f(x)dx=\int_{\mathbb{R}^n}e^{-\Vert x\Vert_{K_f}}dx,
    \]
    \[
    \int_{H_\sigma^\bot\oplus H_\tau}P_{H_\sigma^\bot\oplus H_\tau}f(x)dx\geq\int_{H_\sigma^\bot\oplus H_\tau}P_{H_\sigma^\bot\oplus H_\tau}e^{-\Vert x\Vert_{K_f}}dx
    \]
    for every $\tau\subseteq\sigma$ and
    \[
\int_{H_\sigma^\bot}P_{H_\sigma^\bot}f(x)dx=\int_{H_\sigma^\bot}P_{H_\sigma^\bot}e^{-\Vert x\Vert_{K_f}}dx.
    \]
\end{thm}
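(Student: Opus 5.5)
The plan follows the pattern of the proof of Theorem \ref{thm: iso proyec}: feed the local functional inequality \eqref{eq: funcional local BT} into the polynomial optimization tool, Theorem \ref{thm:AlgebraicTool}, and then build the extremal body from the resulting product tuple.

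\textbf{Normalization and constants.} Set $A:=\int_{H_\sigma^\bot}P_{H_\sigma^\bot}f(x)dx>0$. For $\tau\subseteq\sigma$ define
\[
C_\tau:=\frac{1}{|\tau|!\,A}\int_{H_\sigma^\bot\oplus H_\tau}P_{H_\sigma^\bot\oplus H_\tau}f(x)dx,
\]
so that $C_\emptyset=1$ and $C_\sigma=\frac{1}{|\sigma|!A}\int_{\mathbb R^n}f$. Dividing \eqref{eq: funcional local BT} by $A^m$ gives
\[
C_\sigma^s\leq\prod_{j=1}^m C_{\sigma_j}
\]
for every $s$-cover $(\sigma_1,\dots,\sigma_m)$ of $\sigma$. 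Since $(\sigma\setminus\sigma_j)_j$ is an $(m-s)$-cover, this is exactly Type-$(1)$ after relabelling $x_{\sigma\setminus\tau}:=C_\tau$.

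To get Type-$(2)$, apply \eqref{eq: funcional local BT} to $P_{H_\sigma^\bot\oplus H_\tau}f$ on the space $H_\sigma^\bot\oplus H_\tau$, with the $1$-cover $(\{j\}:j\in\tau)$ of $\tau$. Here one uses $P_{H_\sigma^\bot\oplus H_{\{j\}}}(P_{H_\sigma^\bot\oplus H_\tau}f)=P_{H_\sigma^\bot\oplus H_{\{j\}}}f$ and that the $H_\sigma^\bot$-projection is again $A$. This yields $C_\tau\leq\prod_{j\in\tau}C_{\{j\}}$.

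Hence $(C_\tau)\in\mathcal C_\sigma\neq\emptyset$, and Theorem \ref{thm:AlgebraicTool} provides a minimal tuple $(t_\tau)$ with $t_\sigma=C_\sigma$, $t_\tau\leq C_\tau$ and $t_\tau=\prod_{j\in\tau}t_{\{j\}}$. The main care in this step is applying Theorem \ref{thm:AlgebraicTool} with the indexing convention above consistently, i.e.\ whether the relevant variable is $x_\tau$ or $x_{\sigma\setminus\tau}$.

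\textbf{Construction of $K_f$.} Let $d=n-|\sigma|$ and take $L\subset H_\sigma^\bot$ to be any $d$-dimensional convex body with $0\in L$ and $d!\,\mathrm{vol}_d(L)=A$, for instance a scaled copy of the support of $P_{H_\sigma^\bot}f$, or simply a ball; this freedom is why $L$ is not unique. Take $c_i=t_{\{i\}}/2$ and $K_f=\mathrm{conv}(\{\sum_{i\in\sigma}c_i[-e_i,e_i],L\})$.

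For $\tau\subseteq\sigma$, projecting $K_f$ onto $H_\sigma^\bot\oplus H_\tau$ gives $\mathrm{conv}(B_\tau,L)$ with $B_\tau=\sum_{i\in\tau}c_i[-e_i,e_i]$. By Lemma \ref{lem: propiedades del funcional de Minkowski} iii), its gauge splits as $\|x\|_{B_\tau}+\|y\|_L$, so Fubini and Lemma \ref{lem: calculo del volumen con el funcional de minkowski} give
\[
\int P_{H_\sigma^\bot\oplus H_\tau}e^{-\|\cdot\|_{K_f}}=|\tau|!\,\mathrm{vol}(B_\tau)\,d!\,\mathrm{vol}_d(L)=|\tau|!\,t_\tau A.
\]
The middle identity uses $P_{H}e^{-\|\cdot\|_K}=e^{-\|\cdot\|_{P_HK}}$ as in the introduction.

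\textbf{Conclusion.} With $\tau=\sigma$ and $t_\sigma=C_\sigma$, the computation above gives $\int e^{-\|\cdot\|_{K_f}}=\int f$. For general $\tau$, the bound $t_\tau\leq C_\tau$ gives the inequality. For $\tau=\emptyset$, both sides equal $A$, because $t_\emptyset=1$ as an empty product.

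The expected main obstacle is verifying that $(C_\tau)$ satisfies every irreducible-cover inequality with the correct complement indexing, together with the compatibility of iterated projections in the Type-$(2)$ step. The volume computations themselves are routine.
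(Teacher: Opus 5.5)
Your proposal is correct and follows the paper's proof essentially step by step. It uses the same normalized constants $C_\tau$, the same use of \eqref{eq: funcional local BT} for Type-$(1)$, the same application to $P_{H_\sigma^\bot\oplus H_\tau}f$ with the $1$-cover $(\{j\}:j\in\tau)$ for Type-$(2)$, and the same body with $c_i=t_{\{i\}}/2$ and $(n-|\sigma|)!\,\mathrm{vol}(L)=A$. Two small points need fixing. The identification with \eqref{eq:Algebraic} is simply $x_\tau=C_\tau$, since the complement is absorbed by passing to the $(m-s)$-cover $(\sigma\setminus\sigma_j)_j$; it is not $x_{\sigma\setminus\tau}=C_\tau$, and $x_\tau=C_\tau$ is what you actually use afterwards. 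You also need $\int f>0$, not merely $A>0$, so that $C_\sigma>0$ and Theorem \ref{thm:AlgebraicTool}(i) yields $c_i>0$.
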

\begin{proof}
Let $f\in\mathcal{F}(\mathbb{R}^n)$ and $(\sigma_1,\ldots,\sigma_m)$ a $s$-cover of $\sigma\subset[n]$. Using \eqref{eq: funcional local BT} we have that
\begin{equation*}
\begin{split}
    \left(\int_{H_\sigma^\bot}P_{H_\sigma^\bot}f(x)dx\right)^{m-s} & \left(\int_{\mathbb{R}^n}f(x)dx\right)^s \\ & \leq\frac{|\sigma|!^s}{\prod_{j=1}^m|\sigma_j|!}\prod_{j=1}^m\int_{H_\sigma^\bot\oplus H_{\sigma_j}}P_{H_\sigma^\bot\oplus H_{\sigma_j}}f(x)dx.
\end{split}
\end{equation*}
Notice that when $\int f=0$, Theorem \ref{thm:LocalBTprojection} holds true. Thus, we can assume that $\int f>0$. Dividing both sides by 
\[\left(\int_{H_\sigma^\bot}P_{H_\sigma^\bot}f(x)dx\right)^{m},\]
which is not zero since the integral of $f$ is non-zero, and reordering it can be rewritten as
\begin{equation}\label{eq: eq 1 proyec local funtional reescrita}
    \left(\frac{1}{|\sigma|!}\frac{\int_{\mathbb{R}^n}f(x)dx}{\int_{H_\sigma^\bot}P_{H_\sigma^\bot}f(x)dx}\right)^s\leq\prod_{j=1}^m\frac{1}{|\sigma_j|!}\frac{\int_{H_\sigma^\bot\oplus H_{\sigma_j}}P_{H_\sigma^\bot\oplus H_{\sigma_j}}f(x)dx}{\int_{H_\sigma^\bot}P_{H_\sigma^\bot}f(x)dx}.
\end{equation}

Now, we consider the function $P_{H_\sigma^\bot\oplus H_\tau}f$, which is also an integrable log-concave function on $H_\sigma^\bot\oplus H_\tau$ for any $\tau\subseteq\sigma$, and we get that
\begin{equation*}
     \left(\frac{1}{|\tau|!}\frac{\int_{H_\sigma^\bot\oplus H_\tau}P_{H_\sigma^\bot\oplus H_\tau}f(x)dx}{\int_{H_\sigma^\bot}P_{H_\sigma^\bot}P_{H_\sigma^\bot\oplus H_\tau}f(x)dx}\right)^s\leq\prod_{j=1}^m\frac{1}{|\tau_j|!}\frac{\int_{H_\sigma^\bot\oplus H_{\tau_j}}P_{H_\sigma^\bot\oplus H_{\tau_j}}P_{H_\sigma^\bot\oplus H_\tau}f(x)dx}{\int_{H_\sigma^\bot}P_{H_\sigma^\bot}P_{H_\sigma^\bot\oplus H_\tau}f(x)dx},
\end{equation*}
where $(\tau_1,\ldots,\tau_m)$ is a $s$-cover of $\tau$. Taking into account that $P_{H_\sigma^\bot\oplus H_{\tau_j}}P_{H_\sigma^\bot\oplus H_\tau}f=P_{H_\sigma^\bot\oplus H_{\tau_j}}f$, for any $\tau_j\subseteq\tau$, it can be rewritten as
\begin{equation*}
    \left(\frac{1}{|\tau|!}\frac{\int_{H_\sigma^\bot\oplus H_\tau}P_{H_\sigma^\bot\oplus H_\tau}f(x)dx}{\int_{H_\sigma^\bot}P_{H_\sigma^\bot}f(x)dx}\right)^s\leq\prod_{j=1}^m\frac{1}{|\tau_j|!}\frac{\int_{H_\sigma^\bot\oplus H_{\tau_j}}P_{H_\sigma^\bot\oplus H_{\tau_j}}f(x)dx}{\int_{H_\sigma^\bot}P_{H_\sigma^\bot}f(x)dx}.
\end{equation*}
In particular, taking $(\{j\}:j\in \tau)$ a $1$-cover of $\tau$ it follows that
\begin{equation}\label{eq: eq 1 proyec local funtional tau}
    \frac{1}{|\tau|!}\frac{\int_{H_\sigma^\bot\oplus H_\tau}P_{H_\sigma^\bot\oplus H_\tau}f(x)dx}{\int_{H_\sigma^\bot}P_{H_\sigma^\bot}f(x)dx}\leq\prod_{j\in\tau}\frac{\int_{H_\sigma^\bot\oplus H_{\{j\}}}P_{H_\sigma^\bot\oplus H_{\{j\}}}f(x)dx}{\int_{H_\sigma^\bot}P_{H_\sigma^\bot}f(x)dx}.
\end{equation}

We also consider the set of positive real numbers given by 
\begin{equation}\label{set: BT local que hace no vacio}
        \left\{C_\tau:=\frac{1}{|\tau|!}\frac{\int_{H_\sigma^\bot\oplus H_{\tau}}P_{H_\sigma^\bot\oplus H_{\tau}}f(x)dx}{\int_{H_\sigma^\bot}P_{H_\sigma^\bot}f(x)dx}: \,\tau\subseteq\sigma\right\}.
    \end{equation}
     Then,  let $\mathcal{C}$ be the set  defined as in Theorem \ref{thm:AlgebraicTool} for the constants  $C_\tau$ as given in \eqref{set: BT local que hace no vacio}, for any $\tau\subseteq \sigma$. By inequalities \eqref{eq: eq 1 proyec local funtional reescrita} and \eqref{eq: eq 1 proyec local funtional tau} the set \eqref{set: BT local que hace no vacio} belongs to $\mathcal{C}\neq\emptyset$. Therefore, by Theorem \ref{thm:AlgebraicTool} there exists a set of positive real numbers $\{t_\tau\in(0,\infty):\tau\subseteq \sigma\}\in\mathcal{C}$ such that
     \begin{equation}\label{eq:localProdtTau}
    t_\tau=\prod_{j\in\tau}t_{\{j\}}
    \end{equation}
    for any $\tau\subseteq\sigma.$

    Let us consider the convex body given by 
    \[K=\mathrm{conv}\left(\left\{\sum_{i\in\sigma}\frac{t_{\{i\}}}{2}[-e_i,e_i],L
    \right\}\right)\]
    where $L$ is a $(n-|\sigma|)$-dimensional convex body within $\langle \{e_i:i\in[n]\setminus\sigma\}\rangle$ containing the origin with
    \begin{equation}\label{eq: condicion sobre L}
        \int_{H_\sigma^\bot}P_{H_\sigma^\bot}f(x)dx=(n-|\sigma|)!\mathrm{vol}(L).
    \end{equation}
   Moreover, by iii) in Lemma \ref{lem: propiedades del funcional de Minkowski} and Lemma \ref{lem: calculo del volumen con el funcional de minkowski} the volume of $K$ is equal to
    \begin{eqnarray*}
        \mathrm{vol}(K)&=&\frac{1}{n!}\int_{\mathbb{R}^n}e^{-\Vert x\Vert_K}dx\\
        &=&\frac{1}{n!}\int_{H_\sigma}\int_{H_\sigma^\bot}e^{-\left(\Vert y\Vert_{\sum_{i\in\sigma}\frac{t_{\{i\}}}{2}[-e_i,e_i]}+\Vert z\Vert_L\right)}dzdy\\
        &=&\frac{|\sigma|!(n-|\sigma|)!}{n!}\mathrm{vol}\left(\sum_{i\in\sigma}\frac{t_{\{i\}}}{2}[-e_i,e_i]\right)\mathrm{vol}(L)\\
        &=&\frac{1}{{n\choose n-|\sigma|}}\mathrm{vol}(L)\prod_{i\in\sigma}t_{\{i\}}.
    \end{eqnarray*}
Using \eqref{eq: condicion sobre L}, \eqref{eq:localProdtTau}, and the fact that $\{t_\tau:\tau\subseteq\sigma\}\in\mathcal C$ we get that 
\[
\begin{split}
    \int_{\mathbb{R}^n}e^{-\Vert x\Vert_{K}}dx & =(n-|\sigma|)!\mathrm{vol}(L)|\sigma|!\prod_{i\in\sigma}t_{\{i\}} \\
    & = \left(\int_{H_\sigma^\bot}P_{H_\sigma^\bot}f(x)dx\right)|\sigma|!t_\sigma \\ 
    & = \int_{H_\sigma^\bot}P_{H_\sigma^\bot}f(x)dx\frac{\int_{\mathbb{R}^n}f(x)dx}{\int_{H_\sigma^\bot}P_{H_\sigma^\bot}f(x)dx}\\
    & = \int_{\mathbb{R}^n}f(x)dx,
\end{split}
\]
which proves the first identity of this result.

Besides, for any $\tau\subseteq\sigma$, we firstly have that
\begin{equation}\label{eq: primera parte condicion 2 proyec local}
\begin{split}
     \int_{H_\sigma^\bot\oplus H_\tau}P_{H_\sigma^\bot\oplus H_\tau}e^{-\Vert x\Vert_{K}}dx & = \int_{H_\sigma^\bot\oplus H_\tau}e^{-\Vert x\Vert_{P_{H_\sigma^\bot\oplus H_\tau}K}}dx\\
     & =(n-|\sigma|+|\tau|)!\mathrm{vol}\left(P_{H_\sigma^\bot\oplus H_\tau}K\right)\\
     & =(n-|\sigma|)!|\tau|!\mathrm{vol}\left(P_{ H_\tau}\sum_{i\in\sigma}\frac{t_{\{i\}}}{2}[-e_i,e_i]\right)\mathrm{vol}(L)\\
     & =(n-|\sigma|)!\mathrm{vol}(L)|\tau|!\left(\prod_{i\in\tau}t_{\{i\}}\right),
\end{split}
\end{equation}
 where we have used similar ideas to what we did for the first identity and the fact that $P_He^{-\Vert\cdot\Vert_K}=e^{-\Vert\cdot\Vert_{P_HK}}$.
Finally, by Theorem \ref{thm:AlgebraicTool} and \eqref{eq: condicion sobre L}, \eqref{eq: primera parte condicion 2 proyec local} is equal to
\begin{eqnarray*}
   (n-|\sigma|)!\mathrm{vol}(L)|\tau|!t_\tau
    &\leq&\int_{H_\sigma^\bot}P_{H_\sigma^\bot}f(x)dx\frac{\int_{H_\sigma^\bot\oplus H_{\tau}}P_{H_\sigma^\bot\oplus H_{\tau}}f(x)dx}{\int_{H_\sigma^\bot}P_{H_\sigma^\bot}f(x)dx}\\
    &=&\int_{H_\sigma^\bot\oplus H_{\tau}}P_{H_\sigma^\bot\oplus H_{\tau}}f(x)dx,
\end{eqnarray*}
which proves the remaining inequalities of the theorem, concluding the proof of this result.
\end{proof}
\begin{cor}
    Theorem \ref{thm:localSobolevBT} follows from Lemma \ref{lemma:intprojection} and  Theorem \ref{thm:LocalBTprojection}.
\end{cor}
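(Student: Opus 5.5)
The corollary asks to derive Theorem \ref{thm:localSobolevBT} from Theorem \ref{thm:LocalBTprojection} by translating every projection integral into an integral of the operator $M$ via Lemma \ref{lemma:intprojection}. I would take the convex body $K_f=\mathrm{conv}(\{\sum_{i\in\sigma}\frac{t_{\{i\}}}{2}[-e_i,e_i],L\})$ given by Theorem \ref{thm:LocalBTprojection}. Then the identity $\int f=\int e^{-\|\cdot\|_{K_f}}$ is already available, and only the two Hessian-type statements remain.

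\textbf{Step 1: the relevant subspace.} For $\tau\subseteq\sigma$, apply Lemma \ref{lemma:intprojection} with the coordinate index set $\sigma':=([n]\setminus\sigma)\cup\tau$, so that $H_{\sigma'}=H_\sigma^\bot\oplus H_\tau$. Then $H_{\sigma'}^\bot=H_{\sigma\setminus\tau}$ has dimension $|\sigma|-|\tau|$, and the exponent $n-|\sigma'|-2$ equals $|\sigma|-|\tau|-2$. This is exactly the denominator in $M(f,\sigma,\tau)$.

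\textbf{Step 2: check the hypotheses and transfer the inequality.} The hypotheses on $f$ are $\mathcal C^2$, membership in $\mathcal{Q}K$, $\|f\|_\infty<\infty$, and $\nabla_{H_{\{j\}}}f\neq0$ for $j\in\sigma$. The last condition gives $\nabla_{H_{\sigma\setminus\tau}}f\neq0$ a.e. whenever $\tau\neq\sigma$, since $\sigma\setminus\tau$ contains some $j\in\sigma$. Lemma \ref{lemma:intprojection} then yields
\[
\int_{H_\sigma^\bot\oplus H_\tau}P_{H_\sigma^\bot\oplus H_\tau}f=\frac{1}{S(\mathbb B^{|\sigma|-|\tau|})}\int_{\mathbb R^n}M(f,\sigma,\tau)(x)\,dx.
\]
Multiplying the inequality of Theorem \ref{thm:LocalBTprojection} by $S(\mathbb B^{|\sigma|-|\tau|})$ gives the second claim for $\tau\subsetneq\sigma$. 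The case $\tau=\emptyset$ uses the final equality of Theorem \ref{thm:LocalBTprojection}, and this produces the third claim.

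\textbf{Step 3: the extremal side.} Here $e^{-\|\cdot\|_{K_f}}$ is not $\mathcal C^2$, so I would interpret its $M$-integral as the limit over smooth approximants $g_k$, as the statement stipulates. Applying Lemma \ref{lemma:intprojection} to each $g_k$ and passing to the limit using $\int P g_k\to\int P e^{-\|\cdot\|_{K_f}}$ identifies the right-hand side with $S(\mathbb B^{|\sigma|-|\tau|})\int P_{H_\sigma^\bot\oplus H_\tau}e^{-\|\cdot\|_{K_f}}$.

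\textbf{Step 4: the explicit values.} From \eqref{eq: primera parte condicion 2 proyec local} this quantity equals $(n-|\sigma|)!\,|\tau|!\,\mathrm{vol}(L)\prod_{i\in\tau}t_{\{i\}}$. Writing $c_i=t_{\{i\}}/2$ gives the stated formula $(n-|\sigma|)!|\tau|!S(\mathbb B^{|\sigma|-|\tau|})\mathrm{vol}(L)2^{|\tau|}\prod_{i\in\tau} c_i$. For $\tau=\emptyset$ it reduces to $S(\mathbb B^{|\sigma|})(n-|\sigma|)!\mathrm{vol}(L)$, which also follows from \eqref{eq: condicion sobre L}.

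\textbf{Main obstacle.} The step I expect to be hardest is justifying Step 3: one must actually exhibit approximants $g_k$ in $\mathcal C^2\cap\mathcal{Q}K$ whose projection integrals converge. I would try convolution of $e^{-\|\cdot\|_{K_f}}$ with a Gaussian, multiplied by $e^{-\|x\|_2^2/k}$ to force strictly curved level sets. Convergence of the projection integrals should then follow from dominated convergence, since $P_H$ commutes well with monotone limits of log-concave functions.
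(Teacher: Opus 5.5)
Your proposal is correct and is exactly the argument the paper leaves implicit. You use $([n]\setminus\sigma)\cup\tau$ as the index set in Lemma \ref{lemma:intprojection}, so that the projection space is $H_\sigma^\bot\oplus H_\tau$ and the exponent becomes $|\sigma|-|\tau|-2$. You then multiply the second and third statements of Theorem \ref{thm:LocalBTprojection} by $S(\mathbb B^{|\sigma|-|\tau|})$, reading the extremal side through approximants.

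The paper never constructs those approximants; it simply stipulates them, so your \emph{main obstacle} lies outside what the corollary needs. If you do pursue it, note two things. First, the $g_k$ must also satisfy the a.e.\ nonvanishing-gradient hypothesis of the lemma. Second, your Gaussian-modified convolutions are not monotone in $k$, so convergence of the projection integrals has to come from pointwise convergence plus a uniform integrable bound, not from monotonicity.
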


\section{Isoperimetric results for general product measures}\label{sec:ProdMeasure}

We start this section by proving Theorem \ref{thm:ProductMeasure}.

\begin{proof}[Proof of Theorem \ref{thm:ProductMeasure}]
Let $(\sigma_1,\ldots,\sigma_m)$ be a $s$-cover of $[n]$. Then, by \eqref{eq:BTprodMeas}
    \begin{equation}\label{eq: BT ineq product measures dem iso 1}
    \mu_1\otimes\cdots\otimes\mu_n(K)^s\leq\prod_{i=1}^m\underset{j\in\sigma_i}{\otimes}\mu_j\left(P_{H_{\sigma_i}}K\right).
\end{equation}
If we consider the convex body $P_{H_\tau}K$ for any $\tau\subseteq[n]$, we also have 
 \begin{equation*}
    \underset{j\in\tau}{\otimes}\mu_j(P_{H_\tau}K)^s\leq\prod_{i=1}^m\underset{j\in\tau_i}{\otimes}\mu_j\left(P_{H_{\tau_i}}K\right).
\end{equation*}
where $(\tau_1,\ldots,\tau_m)$ is a $s$-cover of $\tau.$  It can be also applied to the $1$-cover of $\tau$ given by $(\{j \}:\,j\in\tau)$ so that
\begin{equation}\label{eq: BT ineq product measures dem iso 1 tau}
     \underset{j\in\tau}{\otimes}\mu_j(P_{H_\tau}K)\leq\prod_{i\in\tau}\mu_i\left(P_{H_{\{i\}}}K\right).
\end{equation}

Let us now consider the set of positive real numbers
\begin{equation}\label{set: BT product measures que hace no vacio}
    \left\{C_{\tau}:=\underset{j\in\tau}{\otimes}\mu_j\left(P_{H_{\tau}}K\right):\, \tau\subseteq[n]\right\}.
\end{equation}
Then, let $\mathcal{C}$ be the set defined as in Theorem \ref{thm:AlgebraicTool} for the constants  $C_\tau$ as given in \eqref{set: BT product measures que hace no vacio}, for any $\tau\subseteq [n]$. By inequalities \eqref{eq: BT ineq product measures dem iso 1} and \eqref{eq: BT ineq product measures dem iso 1 tau} the set \eqref{set: BT product measures que hace no vacio} belongs to $\mathcal{C}\neq\emptyset$. Therefore, by Theorem \ref{thm:AlgebraicTool} there exists a set of positive real numbers $\{t_\tau\in(0,\infty):\tau\subseteq[n]\}\in\mathcal{C}$ such that
     \[
    t_\tau=\prod_{j\in\tau}t_{\{j\}}
    \]
    for any $\tau\subseteq[n].$

Let $G_j:\left[\mathrm{supp}(\mu_j)\cup (-\mathrm{supp}(\mu_j))\right]\cap[0,+\infty]\to\mathbb{R}$ be the function given by 
\[
G_j(x)=\mu_j([-x,x])=\int_{-x}^xd\mu_j(y)
\]
for every $j=1,\ldots,n$. Each function $G_j$ is non-decreasing by the monotonicity of the measure $\mu_j$. Moreover, it is also strictly increasing since it is defined over $[\mathrm{supp}(\mu_j)\cup(-\mathrm{supp}(\mu_j))]\cap[0,\infty)$. Therefore, it is an injective function and its inverse is well-defined with
\[
G_j(G_j^{-1}(y))=y
\]    
for every $y\in\mathrm{Im}(G_j)$. Notice that due to the continuity of $\mu_j$, we thus have that $\mathrm{Im}(G_j)=[0,\sup G_j]$ is a segment (where for instance, $\sup G_j=+\infty$ in the case of the Lebesgue measure or $\sup G_j=1$ in case of a probability measure).

Observe that by Theorem \ref{thm:AlgebraicTool} 
\[
t_{\{j\}}\leq\mu_{j}\left(P_{H_{\{j\}}}K\right)\leq \sup G_j,
\]
and hence $t_{\{j\}}\in\mathrm{Im}(G_j)$ and 
\[
G_j(G_j^{-1}(t_{\{j\}}))=t_{\{j\}}
\]
Thus, let $B$ be the coordinate box given by
\[
B:=\prod_{j=1}^n\left[-G_j^{-1}\left(t_{\{j\}}\right),G_j^{-1}\left(t_{\{j\}}\right)\right].
\]

Then, it follows that
\begin{eqnarray*}
    \mu_1\otimes\cdots\otimes\mu_n(B)&=&\prod_{j=1}^n\mu_j\left(\left[-G_j^{-1}\left(t_{\{j\}}\right),G_j^{-1}\left(t_{\{j\}}\right)\right]\right)\\
    &=&\prod_{j=1}^nG_j\left(G_j^{-1}\left(t_{\{j\}}\right)\right)\\
    &=&\prod_{j=1}^nt_{\{j\}}\\
    &=&t_{[n]}\\
    &=&\mu_1\otimes\cdots\otimes\mu_n(K).
\end{eqnarray*}
Analogously for any $\tau\subseteq[n]$ we also have
\begin{eqnarray*}
    \underset{j\in\tau}{\otimes}\mu_j(P_{H_\tau}B)&=&\prod_{j\in\tau}\mu_j\left(\left[-G_j^{-1}\left(t_{\{j\}}\right),G_j^{-1}\left(t_{\{j\}}\right)\right]\right)\\
    &=&\prod_{j\in\tau}G_j\left(G_j^{-1}\left(t_{\{j\}}\right)\right)\\
    &=&\prod_{j\in\tau}t_{\{j\}}\\
    &=&t_{\tau}\\
    &\leq&\underset{j\in\tau}{\otimes}\mu_j(P_{H_\tau}K),\\
\end{eqnarray*}
which concludes the proof.
\end{proof}

Theorem \ref{thm:ProductMeasure} can be applied to several non-homogeneous product measures. For instance, let $\gamma_n$ be the Gaussian measure defined by
\[d\gamma_n(x):=\frac{1}{(2\pi)^\frac{n}{2}}e^{\frac{-\Vert x\Vert_2^2}{2}}dx.\]
Notice that if $B=\prod_{i=1}^n[a_i,b_i]$ is a coordinate box, for some $a_i<b_i$, $i=1,\dots,n$ then
\begin{eqnarray*}
    \gamma_n(B)&=&\int_B\frac{1}{(2\pi)^\frac{n}{2}}e^{\frac{-\Vert x\Vert_2^2}{2}}dx\\
    &=&\prod_{i=1}^n\int_{a_i}^{b_i}\frac{1}{\sqrt{2\pi}}e^{\frac{-x_i^2}{2}}dx_i\\
    &=&\prod_{i=1}^n\gamma_1([a_i,b_i]),
\end{eqnarray*}
i.e.~$\gamma_n$ is a product measure, and thus one can derive from Theorem \ref{thm:ProductMeasure} the following result.
\begin{cor}
    Let $K\in\mathcal K^n$. Then there exists a coordinate box $B_K$ such that
    \[\gamma_n(K)=\gamma_n(B_K)\]
    and 
    \[\gamma_{|\tau|}(P_{H_\tau}K)\geq\gamma_{|\tau|}(P_{H_\tau}B_K)\]
    for any $\tau\subseteq[n].$    
\end{cor}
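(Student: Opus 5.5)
The corollary is a direct specialization of Theorem \ref{thm:ProductMeasure} to $\mu_j=\gamma_1$ for every $j\in[n]$, once we check that $\gamma_n$ is this product measure. The plan is as follows.

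\emph{Verify the hypotheses.} First, $\gamma_1$ has the density $\frac{1}{\sqrt{2\pi}}e^{-t^2/2}$ with respect to Lebesgue measure, so it is a continuous measure on $\mathbb R$: $\gamma_1(\{x\})=0$ for every $x$. Its support is all of $\mathbb R$, since the density is strictly positive. Hence $\mathrm{supp}(\gamma_1\otimes\cdots\otimes\gamma_1)=\mathbb R^n$, and the condition $K\subseteq\mathrm{supp}(\mu_1\otimes\cdots\otimes\mu_n)$ holds automatically for every $K\in\mathcal K^n$. The same positivity makes $G_j(x)=\gamma_1([-x,x])$ strictly increasing and continuous on $[0,\infty)$, with image $[0,1)$. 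This justifies the inversion used in the proof of Theorem \ref{thm:ProductMeasure}.

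\emph{Identify $\gamma_n$ with the product measure.} The computation displayed just before the corollary shows that $\gamma_n(B)=\prod_{i}\gamma_1([a_i,b_i])$ on coordinate boxes. Boxes generate the Borel $\sigma$-algebra and form a $\pi$-system, so by uniqueness of extension $\gamma_n=\gamma_1^{\otimes n}$ as measures. More generally, for any $\tau\subseteq[n]$, the same factorization of the density $e^{-\|x\|_2^2/2}=\prod_{i\in\tau}e^{-x_i^2/2}$ on $H_\tau$ gives $\gamma_{|\tau|}=\otimes_{j\in\tau}\gamma_1$ on $H_\tau\cong\mathbb R^{|\tau|}$.

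\emph{Apply the theorem.} Theorem \ref{thm:ProductMeasure} then yields a coordinate box $B_K$ with $\gamma_n(K)=\gamma_n(B_K)$ and $\gamma_{|\tau|}(P_{H_\tau}K)\geq\gamma_{|\tau|}(P_{H_\tau}B_K)$ for every $\tau\subseteq[n]$.

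The only point needing some care is the identification of $\gamma_{|\tau|}$ on the coordinate subspace $H_\tau$ with $\otimes_{j\in\tau}\gamma_1$. This is immediate from the factorization of the Gaussian density, so no real obstacle is expected.
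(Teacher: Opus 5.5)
Your proposal is correct and follows the paper's route exactly: the paper also checks that $\gamma_n$ factorizes on coordinate boxes, so that it is the product measure $\gamma_1\otimes\cdots\otimes\gamma_1$, and then applies Theorem \ref{thm:ProductMeasure}. Your extra checks (atomlessness and full support of $\gamma_1$, the $\pi$-system uniqueness argument, and the identification $\gamma_{|\tau|}=\otimes_{j\in\tau}\gamma_1$ on $H_\tau$) make explicit what the paper leaves implicit; the only addition is that, since $\mathrm{Im}(G_j)=[0,1)$, the inversion also needs $t_{\{j\}}\leq\gamma_1(P_{H_{\{j\}}}K)<1$, which holds because $K$ is compact.
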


\section{Further remarks}\label{sec:furtherRemarks}

One may observe that the optimizing functions in Theorems \ref{thm:SobolevBTineq}, \ref{thm:SobolevLiakopoulos}, and \ref{thm:localSobolevBT} are functions of the form $e^{-\|\cdot\|_K}$, for a certain convex body $K$ containing the origin. As mentioned in the introduction, these functions naturally induce an embedding of the space of convex bodies onto that of log-concave functions. However, we find another natural embedding, once we consider the family of characteristic functions 
\[
\chi_K(x):=\left\{\begin{array}{lc}
    1 &  \text{ if }x\in K\\
    0 & \text{otherwise.}
\end{array}\right.
\]
This raises the following natural question.
\begin{problem}
    Find Sobolev inequalities for sections or projections of log-concave functions where the extremizers are characteristic functions of a certain convex body.
\end{problem}
Notice that Alonso-Gutiérrez, Bernués, Brazitikos and Carbery \cite[Thm.~4.1]{ABBC21} proved that for every function $f:\mathbb{R}^n\to\mathbb R$ with compact support and
continuous on its support, and every $(\sigma_1,\dots,\sigma_m)$ a $s$-cover of $[n]$, then
\begin{equation}\label{eq:BTcharacte}
    \left(\int_{\mathbb R^n}|f|^{\frac{m}{s}}(x)dx\right)^s\leq\prod_{j=1}^m\int_{H_{\sigma_j}}P_{H_{\sigma_j}}|f|(x)dx.
\end{equation}
Moreover, \eqref{eq:BTcharacte} recovers the classical Bollobás-Thomason inequality when $f=\chi_K$ for $K\in\mathcal{K}^n$.
Unfortunately, the technique developed in Theorem \ref{thm:AlgebraicTool} cannot be used to derive Sobolev inequalities with the set of inequalities above.
\begin{problem}
    Find Sobolev inequalities using the set of inequalities of \eqref{eq:BTcharacte}.
\end{problem}
Analogously, this would lead to finding other sets of functional inequalities in the line of \eqref{eq:BTcharacte} extending some Loomis-Whitney type inequalities tight for characteristic functions.

\textit{Acknowledgements}: We would like to thank Julián Haddad and María A. Hernández Cifre for their useful comments that helped us improving the paper.

\end{document}